\newtheorem {theorem}{Theorem}[section]
\newtheorem {corollary}{Corollary}[section]
\newtheorem {lemma}{Lemma}[section]
\newtheorem {proposition}{Proposition}[section]
\theoremstyle{definition}
\newtheorem{definition}{Definition}[section]
\newtheorem{example}{Example}[section]
\newtheorem{remark}{Remark}[section]
\def\ar{a\kern-.370em\raise.16ex\hbox{\char95\kern-0.53ex\char'47}\kern.05em}
\def\ees{{\accent"5E e}\kern-.385em\raise.2ex\hbox{\char'23}\kern-.08em}
\def\eex{{\accent"5E e}\kern-.470em\raise.3ex\hbox{\char'176}}
\def\AR{A\kern-.46em\raise.80ex\hbox{\char95\kern-0.53ex\char'47}\kern.13em}
\def\EES{{\accent"5E E}\kern-.5em\raise.8ex\hbox{\char'23 }}
\def\EEX{{\accent"5E E}\kern-.60em\raise.9ex\hbox{\char'176}\kern.1em}
\def\ow{o\kern-.42em\raise.82ex\hbox{
  \vrule width .12em height .0ex depth .075ex \kern-0.16em \char'56}\kern-.07em}
\def\OW{O\kern-.460em\raise1.36ex\hbox{
\vrule width .13em height .0ex depth .075ex \kern-0.16em \char'56}\kern-.07em}
\def\UW{U\kern-.42em\raise1.36ex\hbox{
\vrule width .13em height .0ex depth .075ex \kern-0.16em \char'56}\kern-.07em}
\def\DD{D\kern-.7em\raise0.4ex\hbox{\char '55}\kern.33em}
\title{Openness,  H\"older metric regularity and H\"older continuity properties of semialgebraic set-valued~maps}
\author{Jae Hyoung Lee$^\dag$}
\address[Jae Hyoung Lee]{Department of Applied Mathematics, Pukyong National University, Busan 48513, Republic of Korea}
\email{mc7558@naver.com}
\author{TI\EES N-S\OW N PH\d{A}M$^\ddag$}
\address[Ti\ees n-S\ow n Ph\d{a}m]{Department of Mathematics, University of Dalat, 1 Phu Dong Thien Vuong, Dalat, Vietnam}
\email{sonpt@dlu.edu.vn}
\thanks{$^{\dag}$The first author is supported by the National Research Foundation of Korea (NRF) Grant funded by the Korea government (MSIP) (NRF-2018R1C1B6001842)}
\thanks{$^{\ddag}$The second author is supported by Vietnam National Foundation for Science and Technology Development (NAFOSTED), grant 101.04-2019.302.}
\subjclass{90C31, 49J40, 49J52, 49J53, 14P10}
\keywords{openness, H\"older metric (sub)regularity, H\"older continuity, \L ojasiewicz inequality, variational inequality, semialgebraicity}
\date{ \today}
\begin{document}

\begin{abstract}
Given a semialgebraic set-valued map $F \colon \mathbb{R}^n \rightrightarrows \mathbb{R}^m$ with closed graph, 
we show that the map $F$ is H\"older metrically subregular and that the following conditions are equivalent:
\begin{enumerate}[{\rm (i)}]
\item $F$ is an open map from its domain into its range and the range of $F$ is locally closed;
\item the map $F$ is H\"older metrically regular;
\item the inverse map $F^{-1}$ is pseudo-H\"older continuous;
\item the inverse map $F^{-1}$ is lower pseudo-H\"older continuous.
\end{enumerate}

An application, via Robinson's normal map formulation, leads to the following result in the context of semialgebraic variational inequalities: if the solution map (as a map of the parameter vector) is lower semicontinuous then the solution map is finite and pseudo-H\"older continuous. In particular, we obtain a negative answer to a question 
mentioned in the paper of Dontchev and Rockafellar \cite{Dontchev1996}. 

As a byproduct, we show that for a (not necessarily semialgebraic) continuous single-valued map from $\mathbb{R}^n$ to $\mathbb{R},$ the openness and the  non-extremality are equivalent. This fact improves the main result of P\"uhn \cite{Puhl1998}, which requires the convexity of the map in question.
\end{abstract}

\maketitle

\section{Introduction}

This paper concerns with the openness, the H\"older metric (sub)regularity and the H\"older continuity properties of set-valued maps between finite dimensional vector spaces.
These notions have been recognized to be important in modern variational analysis and optimization and have extensively studied; for more details, we refer the interested reader to the books \cite{Aubin1990, Bonnas2000, Dontchev2009, Ioffe2017, Klatte2002, Mordukhovich2006, Rockafellar1998, Schirotzek2007}, the papers \cite{Aubin1984, Borwein1988, Chuong2016, Frankowska1990, Frankowska2012, Gfrerer2016, Huynh2008, Ioffe2013, Kruger2015, Li2012, Mordukhovich2015, Penot1989, Rockafellar1985, Yen2008, Zheng2015} and to the references contained therein. We offer here a new approach that allows us to derive relations between  the openness, the H\"older metric regularity and the H\"older continuity properties for the class of semialgebraic maps. This approach is mainly based on tools of semialgebraic geometry that extends known results. 

More precisely, let $F \colon \mathbb{R}^n \rightrightarrows \mathbb{R}^m$ be a set-valued map. For simplicity in presentation, we assume in this section that the domain and the range of $F$ are open sets in $\mathbb{R}^n$ and $\mathbb{R}^m,$ respectively.

Penot \cite{Penot1989} showed that the openness at a linear rate of $F,$ the metric regularity of $F,$ and the pseudo-Lipschitz continuity (known also as the Aubin property or the Lipschitz-like continuity) of the inverse map $F^{-1}$ are equivalent; detailed proofs of this result can be found also in \cite[Theorem~3.2]{Mordukhovich2018}.
More generally, thanks to Borwein and Zhuang \cite{Borwein1988} (see also \cite{Ioffe2013, Yen2008}), it is well-known that the openness at an order $p > 0$ rate of $F,$ the metric regularity property of order $1/p$ of $F,$ and the pseudo-H\"older continuity of order $1/p$ of $F^{-1}$ are equivalent.

Since the openness at a linear/positive-order rate is stronger than the openness, the pseudo-Lipschitz/H\"older continuity of $F^{-1}$ implies the openness of $F$ but the converse does not hold in general (see, e.g., Example~\ref{Example5} below).

On the other hand, Gowda and Sznajder \cite{Gowda1996} showed that if the map $F$ is polyhedral (i.e., its graph is a finite union of polyhedra) and the range of $F$ is convex, then the map $F$ is open if and only if the inverse map $F^{-1}$ is Lipschitz continuous, equivalently, locally lower Lipschitz continuous; see also Remark~\ref{Remark1} below. As we can see the polyhedrality of the map $F$ plays a crucial role in establishing these equivalences. 

In nonlinear programming, the situation is quite different: maps in general are not polyhedral, and so relationships between the openness, metric regularity and continuity properties are unclear. On the other hand, since polyhedral maps form a very special subclass of semialgebraic maps, it is natural to study these properties for maps which are semialgebraic.

In this paper, assuming $F$ is a semialgebraic set-valued map with closed graph, we show that  $F$ is H\"older metrically subregular and that the following conditions are equivalent (terminology will be explained later):
\begin{enumerate}[{\rm (i)}]
\item the map $F$ is open (i.e., $F$ carries open sets into open sets);
\item the map $F$ is H\"older metrically regular;
\item the inverse map $F^{-1}$ is pseudo-H\"older continuous;
\item the inverse map $F^{-1}$ is lower pseudo-H\"older continuous.
\end{enumerate}

It should be noted that these equivalences are no longer true in general if we replace the word {\em H\"older} by {\em Lipschitz} or 
if we drop the assumption that $F$ is semialgebraic (see examples in Section~\ref{Section5}).

The main point in the proof of the above equivalences is to show the implication (i)~$\Rightarrow$~(ii). To this end, we first prove that $F$ is open if and only if the inverse images $F^{-1}(y)$ vary continuously (see Lemma~\ref{Lemma31}). This fact, together with a generalization of the \L ojasiewicz inequality (see Lemma~\ref{Lemma32}), implies that if $F$ is open then it is H\"older metrically regular.

As a byproduct of this study, we show that for a (not necessarily semialgebraic) continuous single-valued map from $\mathbb{R}^n$ to $\mathbb{R},$ the openness and the  non-extremality are equivalent. This improves the main result of P\"uhn \cite{Puhl1998} (see also \cite[Proposition~5.11]{Durea2017}), which requires the convexity of the map in question.

At this point we would like to mention that there are necessary and sufficient conditions for a single-valued map from $\mathbb{R}^n$ to itself to be open. This was done 
in \cite{Scholtes2012} by Scholtes for piecewise affine maps, in \cite{Gamboa1996} by Gamboa and Ronga for polynomial maps,  in \cite{Hirsch2002} by Hirsch in the analytic setting, and recently in \cite{Denkowski2017} by Denkowski and Loeb for  subanalytic (or definable in some o-minimal structure) maps of class $C^1.$

As an application of our main theorem, we study the variational inequality problem in which a point $x \in C$ is sought such that
\begin{equation} \label{VI}
\langle p + f(x), x' - x \rangle \ \ge \  0 \quad \textrm{ for all } \quad x' \in C, \tag{VI}
\end{equation}
where $p \in \mathbb{R}^n$ is a parameter vector, $f \colon \mathbb{R}^n \rightarrow \mathbb{R}^n$ is a map and $C \subset \mathbb{R}^n$ is a nonempty closed convex set.

For each $p \in \mathbb{R}^n$ let $\mathscr{S}(p)$ be the (possibly empty) set of solutions of the variational inequality~\eqref{VI}. We concern ourselves with continuity properties of the set-valued map $\mathscr{S} \colon \mathbb{R}^n \rightrightarrows \mathbb{R}^n.$
Specifically we are interested in the circumstances under which the solution map $\mathscr{S}$ is finite (i.e., $\mathscr{S}(p)$ is a finite set for all $p \in \mathbb{R}^n).$

We first assume that the map $f$ is of class $C^1$ and the set $C$ is polyhedral. Let $\mathscr{L} \colon \mathbb{R}^n \rightrightarrows \mathbb{R}^n$ denote the solution map corresponding to a linearized version (at a given point) of the variational inequality~\eqref{VI}.  Thanks to Robinson \cite{Robinson1980}, it is well-known that  the local single valuedness and the Lipschitz continuity of the map $\mathscr{L}$ (a property called ``strong regularity'') implies the same for the map $\mathscr{S}.$ On the other hand, Dontchev and Rockafellar \cite{Dontchev1996} (see also \cite{Ioffe2018}) showed that the converse is also true and that these equivalent conditions are characterized in terms of the so-called ``critical face" condition and in other ways. Furthermore, the lower semicontinuity of the map $\mathscr{L}$ entails the local single valuedness and the Lipschitz continuity of the map $\mathscr{L}.$ The latter result can be compared with the well-known fact that a monotone map has to be single-valued and continuous wherever it is lower semicontinuous. In the introduction section of the paper \cite{Dontchev1996}, the authors note that they do not know {\em whether the lower semicontinuity of the map $\mathscr{S}$ ensures the local single-valuedness and the Lipschitz continuity of the map $\mathscr{S}.$}

Motivated by the aforementioned works, assuming $f$ is a continuous semialgebraic map and $C$ is a closed convex semialgebraic set, we will show that the following conditions are equivalent:
\begin{enumerate}[{\rm (i)}]
\item the map $\mathscr{S}$ is lower semicontinuous;
\item the map $\mathscr{S}$ is pseudo-H\"older continuous;
\item the map $\mathscr{S}$ is lower pseudo-H\"older continuous;
\item the inverse map $\mathscr{S}^{-1}$ is open;
\item the inverse map $\mathscr{S}^{-1}$ is H\"older metrically regular;
\item the {\em (Robinson) normal map} $\mathscr{F}$ associated with the variational inequality~\eqref{VI} is open;
\item the map $\mathscr{F}$ is H\"older metrically regular;
\item the inverse map $\mathscr{F}^{-1}$ is pseudo-H\"older continuous;
\item the inverse map $\mathscr{F}^{-1}$ is lower pseudo-H\"older continuous.
\end{enumerate}
Specifically, these equivalences, combined with some facts of semialgebraic geometry, will imply that the lower semicontinuity of the map $\mathscr{S}$ is already enough to guarantee the finiteness and the pseudo-H\"older continuity of the map $\mathscr{S}.$

Again, we can not replace {\em H\"older} by {\em Lipschitz}, and furthermore, the lower semicontinuity of the map $\mathscr{S}$ does {\em not} ensure its local single-valuedness; see Example~\ref{Example53} below. So we get a negative answer to the question of Dontchev and Rockafellar mentioned above. 

We would like to remark that it is possible to obtain the local versions of the (global) results presented in this paper; alterations needed for the local versions are more or less straightforward. Furthermore, although the results still hold for maps definable in some o-minimal structure (see \cite{Dries1996} for more on the subject), we prefer to use semialgebraic maps for simplicity.

The rest of this paper is organized as follows. Section~\ref{SectionPreliminary} contains some preliminaries from semialgebraic geometry widely used in the proofs of the main results, which will be given in Sections~\ref{Section3} and \ref{Section4}. Finally, several remarks/examples are provided in Section~\ref{Section5}.

\section{Preliminaries} \label{SectionPreliminary}

Throughout this work we shall consider the Euclidean vector space ${\Bbb R}^n$ endowed with its canonical scalar product $\langle \cdot, \cdot \rangle$ and we shall denote its associated norm $\| \cdot \|.$ The closure and interior of a set $X \subset \mathbb{R}^n$ are denoted by $\mathrm{cl}X$ and $\mathrm{int} X,$ respectively.
We denote by $\mathbb{B}_{R}({x})$ the closed ball centered at ${x}$ with radius $R$ and by $\mathbb{B}$ the
the closed unit ball. As usual, $\mathrm{dist}(x, X)$ denotes the Euclidean distance from $x \in \mathbb{R}^n$ to $X \subset \mathbb{R}^n,$ i.e.,
\begin{eqnarray*}
\mathrm{dist}(x, X) &:=& \inf\{\|x - y \| \ | \ y \in X \}.
\end{eqnarray*}
(We adopt the convention that the distance from a point to the empty set is positive infinity.) A subset $X$ of $\mathbb{R}^n$ is {\em locally closed} if for each $x \in X,$ there exists $\epsilon > 0$ such that $\mathbb{B}_\epsilon(x)\cap X$ is a closed set in $\mathbb{R}^n.$

\subsection{Set-valued maps}

For a set-valued map $F \colon \mathbb{R}^n \rightrightarrows \mathbb{R}^m$ we denote its {\em domain}, {\em range} (image), and {\em graph} by, respectively,
\begin{eqnarray*}
\mathrm{dom} F &:=&  \{x \in \mathbb{R}^n \ | \ F(x) \ne \emptyset\},\\
\mathrm{range} F &:=& \{y \in F(x) \ | \ x \in \mathrm{dom} F\},\\
\mathrm{graph} F &:=& \{(x, y) \in \mathbb{R}^n \times \mathbb{R}^m \ | \  x \in \mathrm{dom} F, y \in F(x)\}.
\end{eqnarray*}
The {\em inverse map} $F^{-1} \colon \mathbb{R}^m \rightrightarrows \mathbb{R}^n$ of the map $F$ is defined as
$$F^{-1}(y) := \{x \in \mathbb{R}^m \ | \ y \in F(x)\}.$$
It immediately follows from the above definitions that $\mathrm{dom} F^{-1} = \mathrm{range} F,$ $\mathrm{range} F^{-1} = \mathrm{dom} F,$ and that
\begin{eqnarray*}
\mathrm{graph} F^{-1} &:=& \{(y, x) \in \mathbb{R}^m \times \mathbb{R}^n \ | \  (x, y)  \in \mathrm{graph} F\}.
\end{eqnarray*}

Recall that the map $F$ is called {\em lower semicontinuous} at a point $x \in \mathrm{dom}F$ if for any $y \in F(x)$ and for any sequence of points $x^k \in \mathrm{dom}F$ converging to $x,$ there exists a sequence of points $y^k \in F(x^k)$ converging to $y.$ We say that $F$ is {\em lower semicontinuous} (on $\mathrm{dom} F$) if $F$ is lower semicontinuous at every point $x \in  \mathrm{dom}F.$ By definition, $F$ is lower semicontinuous at $x \in \mathrm{dom}F$ if and only if the inverse image of any open subset of $\mathbb{R}^m$ intersecting $F(x)$ is a neigbourhood of $x;$ confer~\cite[Proposition~1.4.4]{Aubin1990}. 

\begin{definition}
Let $F \colon \mathbb{R}^n \rightrightarrows \mathbb{R}^m$ be a set-valued map.

(i) $F$ is said to be an open map from $\mathrm{dom}F$ into $\mathrm{range} F$ if for every open set $U$ in $\mathrm{dom}F,$ the set $F(U)$ is open in $\mathrm{range} F;$

(ii) $F$ is said to be {\em H\"older metrically regular} if for each point $y^* \in \mathrm{range} F$ and for each compact set $K \subset \mathbb{R}^n,$ there exist constants $\epsilon > 0,$ $c > 0$ and $\alpha > 0$ such that
\begin{eqnarray*}
\mathrm{dist}(x, F^{-1}(y)) &\leq& 
c[\mathrm{dist}(y, F(x))]^\alpha
\end{eqnarray*}
for all $x \in K$ and all $y \in \mathbb{B}_\epsilon(y^*) \cap \mathrm{range}F.$

(iii) $F$ is said to be {\em H\"older metrically subregular} if for each point $y^* \in \mathrm{range} F$ and for each compact set $K \subset \mathbb{R}^n,$ there exist constants $c > 0$ and $\alpha > 0$ such that
\begin{eqnarray*}
\mathrm{dist}(x, F^{-1}(y^*)) &\leq& c[\mathrm{dist}(y^*, F(x))]^\alpha
\end{eqnarray*}
for all $x \in K.$

(iv) $F$ is said to be {\em lower pseudo-H\"older continuous} (on $\mathrm{dom}F$) if for each point $x^* \in \mathrm{dom} F$ and for each compact set $K \subset \mathbb{R}^m,$ there exist constants $\epsilon > 0,$ $c > 0$ and $\alpha > 0$ such that
\begin{eqnarray*}
F(x^*)  \cap K &\subset& F(x) + c \|x - x^*\|^\alpha \mathbb{B}
\end{eqnarray*}
for all $x \in \mathbb{B}_\epsilon(x^*)\cap\mathrm{dom}F.$

(v) $F$ is said to be {\em pseudo-H\"older continuous} (on $\mathrm{dom}F$) if for each point $x^* \in \mathrm{dom} F$ and for each compact set $K \subset \mathbb{R}^m,$ there exist constants $\epsilon > 0,$ $c > 0$ and $\alpha > 0$ such that
\begin{eqnarray*}
F(x^1)  \cap K &\subset& F(x^2) + c \|x^1 - x^2\|^\alpha \mathbb{B}
\end{eqnarray*}
for all $x^1, x^2 \in \mathbb{B}_\epsilon(x^*)\cap\mathrm{dom}F.$
\end{definition}

It should be noted that in the above definition we do not require that $x \in \mathrm{int} (\mathrm{dom} F)$ or $y \in \mathrm{int} (\mathrm{range} F).$

\subsection{Semialgebraic geometry}
Now, we recall some notions and results of semialgebraic geometry, which can be found in \cite{Benedetti1990, Bochnak1998, HaHV2017, Dries1996}.

\begin{definition}
A subset $S$ of $\mathbb{R}^n$ is called {\em semialgebraic}, if it is a finite union of sets of the form
$$\{x \in \mathbb{R}^n \ | \  f_i(x) = 0, \ i = 1, \ldots, k; f_i(x) > 0, \ i = k + 1, \ldots, p\},$$
where all $f_{i}$ are polynomials.
In other words, $S$ is a union of finitely many sets, each defined by finitely many polynomial equalities and inequalities.
A set-valued map $F \colon \mathbb{R}^n \rightrightarrows \mathbb{R}^m$ is said to be {\em semialgebraic}, if its graph is a semialgebraic set.
\end{definition}

\begin{example}
Recall that a polyhedral set is the intersection of a finite number of half-spaces and a set-valued map $F \colon \mathbb{R}^n \rightrightarrows \mathbb{R}^m$ is said to be {\em polyhedral} if its graph is a finite union of polyhedral sets. By definition, then polyhedral maps are semialgebraic.
\end{example}

A major fact concerning the class of semialgebraic sets is its stability under linear projections (see, for example, \cite{Bochnak1998}).

\begin{theorem}[Tarski--Seidenberg theorem] \label{TarskiSeidenbergTheorem}
The image of any semialgebraic set $S \subset \mathbb{R}^n$ under a projection to any linear subspace of $\mathbb{R}^n$ is a semialgebraic set.
\end{theorem}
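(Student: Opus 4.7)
The plan is to reduce the theorem to the projection $\pi \colon \mathbb{R}^{n+1} \to \mathbb{R}^n$ that drops the last coordinate. Any projection onto a linear subspace factors through a linear change of coordinates (which preserves semialgebraicity, being a polynomial bijection) followed by finitely many such one-coordinate projections, so an induction on the codimension reduces the whole statement to this single case.

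For the core step, I would take a semialgebraic set $S \subset \mathbb{R}^{n+1}$ presented as a finite union of basic sets defined by polynomial equalities and inequalities in $(x,t) \in \mathbb{R}^n \times \mathbb{R}$. Collecting all the polynomials that appear gives a finite family $P_1(x,t), \ldots, P_s(x,t)$. Viewed pointwise, membership of $(x,t)$ in $S$ depends only on the sign pattern $(\mathrm{sign}\,P_1(x,t), \ldots, \mathrm{sign}\,P_s(x,t)) \in \{-1,0,1\}^s$. Therefore $\pi(S)$ is exactly the union, over an explicit finite list of admissible sign patterns $\sigma$, of the sets
\[
E_\sigma := \{x \in \mathbb{R}^n : \exists t \in \mathbb{R}, \ \mathrm{sign}\,P_i(x,t) = \sigma_i \text{ for all } i\}.
\]
So it suffices to prove that each $E_\sigma$ is semialgebraic.

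The hard part, and the technical heart of the argument, is to obtain a uniform (in the parameter $x$) description of the real roots of the univariate polynomials $P_i(x,\cdot)$ and of their signs on each interval between consecutive roots. I would do this via a parametric real-root analysis using Thom's lemma, which characterizes each real root of a univariate polynomial by the sign pattern of its iterated derivatives at that root, combined with subresultant sequences, whose vanishing/signs detect root coincidences and leading-coefficient degeneracies. These tools provide a finite list of auxiliary polynomials in the coefficients of the $P_i$---hence in $x$---whose signs determine, case by case, the ordered configuration of real roots and intervening signs of $P_1(x,\cdot), \ldots, P_s(x,\cdot)$. Existence of a $t$ realizing the prescribed sign pattern $\sigma$ then becomes a Boolean combination of polynomial sign conditions on $x$, which is semialgebraic by definition.

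The main obstacle is exactly this uniform parametric description: as $x$ varies, both the degrees and the number and locations of the real roots of the $P_i(x,\cdot)$ can jump, and these discontinuities are what make the projection potentially complicated. The role of Thom's lemma (or, equivalently, of subresultant/Sturm-sequence machinery) is to package these jumps into finitely many polynomial sign conditions on $x$, after which assembling a semialgebraic description of each $E_\sigma$, and hence of $\pi(S)$, is routine.
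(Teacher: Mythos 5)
The paper does not prove this statement at all: it is quoted as a classical fact, with a pointer to the standard references (e.g.\ Bochnak--Coste--Roy), so there is no in-paper proof to compare against. Your outline is precisely the classical route taken in those references: reduce, via a linear change of coordinates and induction on codimension, to the projection $\mathbb{R}^{n+1}\to\mathbb{R}^n$ forgetting one variable; rewrite $\pi(S)$ as a finite union of sets $E_\sigma$ given by an existential quantifier over $t$ applied to a sign condition on finitely many polynomials $P_i(x,t)$; and then eliminate the quantifier by a uniform parametric analysis of the real roots of the $P_i(x,\cdot)$. The preparatory steps you spell out are all correct (in particular, images under linear isomorphisms are semialgebraic simply by substituting linear forms into the defining polynomials, with no circularity). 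The one caveat is that the decisive step --- producing finitely many polynomials in $x$ alone whose sign conditions determine, uniformly in $x$, the number and ordering of the real roots of the $P_i(x,\cdot)$ and their signs on the intervening intervals --- is named (Thom's lemma, subresultant/Sturm machinery) but not carried out, and that step is essentially the entire content of one-variable quantifier elimination. If you are permitted to quote that machinery as known, your argument is a faithful and correct skeleton of the standard proof; if not, the proposal is an accurate plan rather than a complete proof, and the missing work is exactly the uniform sign-determination lemma you identify as the ``technical heart.''
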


\begin{remark}{\rm
As an immediate consequence of the Tarski--Seidenberg Theorem, we get semialgebraicity of any set $\{ x \in A \ | \  \exists y \in B,  (x, y) \in C \},$  provided that $A ,  B,$  and $C$  are semialgebraic sets in the corresponding spaces.
Also, $\{ x \in A \ | \  \forall y \in B,  (x, y) \in C \}$ is a semialgebraic set as its complement is the union of the complement of $A$  and the set $\{ x \in A \ | \  \exists y \in B,  (x, y) \not\in C \}.$
Thus, if we have a finite collection of semialgebraic sets, then any set obtained from them with the help of a finite chain of quantifiers is also semialgebraic.
In particular, if $F \colon \mathbb{R}^n \rightrightarrows \mathbb{R}^m$ is a semialgebraic set-valued map, then the inverse map $F^{-1} \colon \mathbb{R}^m \rightrightarrows \mathbb{R}^n$ is also semialgebraic and the sets $\mathrm{dom} F$ and $\mathrm{range} F$ are semialgebraic.
}\end{remark}

The following well-known lemmas will be of great importance for us.

\begin{lemma}[curve selection lemma] \label{LocalCurveSelectionLemma}
Let $A \subset \mathbb{R}^n$ be a semialgebraic set$,$ and let $a \in \mathrm{cl} A\setminus A.$
Then$,$ there exists a continuous semialgebraic curve $\phi \colon [0, \epsilon) \rightarrow \mathbb{R}^n$ such that $\phi(0) = a $ and $\phi(t) \in A$ for all $t \in (0, \epsilon).$
\end{lemma}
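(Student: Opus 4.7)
The plan is to reduce the construction of the curve to a one-parameter selection problem, handled by Tarski--Seidenberg together with the structure of semialgebraic subsets of $\mathbb{R}$.

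After translating so that $a = 0$, I would consider the squared-distance map $\rho(x) = \|x\|^2$. By Theorem \ref{TarskiSeidenbergTheorem}, $\rho(A)$ is a semialgebraic subset of $\mathbb{R}$, and since $0 \in \mathrm{cl}(A) \setminus A$, we have $0 \in \mathrm{cl}(\rho(A)) \setminus \rho(A)$. Because every semialgebraic subset of $\mathbb{R}$ is a finite disjoint union of points and open intervals, $\rho(A)$ must contain an interval of the form $(0, \delta)$ for some $\delta > 0$.

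The core of the argument is to construct a semialgebraic section $\psi \colon (0, \delta) \to A$ of $\rho$. I would do this by lexicographic minimisation: set
\[
\psi_1(s) := \inf\{x_1 \ | \ x \in A, \ \|x\|^2 = s\},
\]
which is semialgebraic by the quantifier-elimination remark following Theorem \ref{TarskiSeidenbergTheorem}, and then recursively
\[
\psi_i(s) := \inf\{x_i \ | \ x \in A, \ \|x\|^2 = s, \ x_j = \psi_j(s) \text{ for } j < i\}.
\]
Each $\psi_i$ is semialgebraic, so $\phi(t) := \psi(t^2)$ is a semialgebraic map $(0, \sqrt{\delta}) \to \mathbb{R}^n$ with $\|\phi(t)\| = t$. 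Semialgebraic functions of one real variable are piecewise analytic (Puiseux expansion of the defining algebraic equations), hence $\phi$ is continuous on some subinterval $(0, \epsilon)$. Since $\|\phi(t)\| = t \to 0$ as $t \to 0^+$, extending by $\phi(0) := 0 = a$ yields the desired continuous semialgebraic curve.

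The main obstacle is ensuring that the infima defining $\psi_i$ are actually attained, so that $\phi(t)$ lies in $A$ itself rather than only in $\mathrm{cl}(A)$. To secure this I would, before starting the coordinate-by-coordinate construction, replace $A$ by a suitable piece of a finite semialgebraic decomposition of $A \cap \{\rho < \delta\}$---specifically a cell of a cylindrical algebraic decomposition whose closure contains $0$ and on which the coordinate projections appear as graphs of continuous semialgebraic functions---and apply the construction there. This step relies on the cell decomposition theorem for semialgebraic sets, which goes slightly beyond the bare statement of Tarski--Seidenberg recorded in the excerpt; alternatively, one could bypass selection altogether by an induction on the ambient dimension $n$ using a generic linear projection, reducing to the trivial case $n = 1$.
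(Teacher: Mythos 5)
The paper itself does not prove this lemma; it is quoted as a classical fact with references (Bochnak--Coste--Roy, van den Dries--Miller, etc.), so your proposal has to be judged against the standard argument. Your overall strategy --- definably selecting a point of $A$ in each fiber of $\rho(x)=\|x-a\|^2$ over a small interval $(0,\delta)$, then using one-variable semialgebraic piecewise continuity (the monotonicity lemma) to get a continuous semialgebraic curve on some $(0,\epsilon)$, extended by $\phi(0)=a$ via $\|\phi(t)-a\|=t$ --- is precisely the standard o-minimal proof, and the outer steps are fine: $0\in\mathrm{cl}(\rho(A))\setminus\rho(A)$ does force $(0,\delta)\subset\rho(A)$, and continuity at $0$ and semialgebraicity of the extended curve are immediate.

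The genuine gap is the selection step, which you rightly flag as the crux, but your repair does not close it. Lexicographic minimisation by infima collapses as soon as the first infimum is not attained in $A$: then the constraint set defining $\psi_2$ is empty and the recursion is meaningless. Passing to a cell $C$ of a cylindrical decomposition of $A\cap\{\rho<\delta\}$ adjacent to $a$ does not restore attainment, because the slice $\{x\in C \ | \ \|x-a\|^2=s\}$ is relatively closed in $C$ but not closed in $\mathbb{R}^n$; its coordinate infima are in general attained only in $\mathrm{cl}(C)\setminus C$. Concretely, for $A$ the punctured unit disk in $\mathbb{R}^2$, $a=0$, and the cell $C=\{x_2>0,\ \|x\|<1\}$, the slice is an open semicircle and $\inf x_1=-\sqrt{s}$ is not attained in $C$; the graph/band structure of the cell does not help, since the sphere condition cuts across the cylindrical fibration. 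What is needed is genuine semialgebraic (definable) choice: choose an element of a nonempty semialgebraic subset of $\mathbb{R}$ by cases (its minimum if it has one; otherwise, say, the midpoint of its leftmost bounded maximal interval, or left endpoint plus one, etc.), uniformly in the parameter $s$, and induct on the coordinates. This yields a semialgebraic section $s\mapsto\psi(s)\in A\cap\rho^{-1}(s)$, after which the rest of your argument goes through verbatim. Your alternative suggestion (induction on $n$ via a generic linear projection) is too sketchy to count as a proof: a curve constructed in the projected set must still be lifted back into $A$, which is again a selection problem of exactly the kind you were trying to avoid.
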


\begin{lemma}[growth dichotomy lemma] \label{GrowthDichotomyLemma}
Let $f \colon (0, \epsilon) \rightarrow {\mathbb R}$ be a semialgebraic function with $f(t) \ne 0$ for all $t \in (0, \epsilon).$ Then there exist constants $a \ne 0$ and $\alpha \in {\mathbb Q}$ such that $f(t) = at^{\alpha} + o(t^{\alpha})$ as $t \to 0^+.$
\end{lemma}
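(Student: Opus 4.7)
The plan is to reduce the claim to the classical Puiseux expansion for algebraic functions. First, using cell decomposition (a standard consequence of the Tarski--Seidenberg theorem), I would shrink $\epsilon$ and assume that $f$ is continuous (in fact real analytic) on $(0,\epsilon)$; this costs nothing, since a one-variable semialgebraic function has only finitely many discontinuities on a bounded interval. The graph $\Gamma = \{(t, f(t)) : t \in (0,\epsilon)\} \subset \mathbb{R}^2$ is then a one-dimensional semialgebraic set.

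Next, I would produce a nonzero polynomial $P(t,s) \in \mathbb{R}[t,s]$ such that $P(t, f(t)) \equiv 0$ on some right neighborhood $(0,\delta)$ of $0$. One clean way is to pass to the Zariski closure of $\Gamma$: its dimension equals $\dim \Gamma = 1 < 2$, so it is a proper algebraic subset of $\mathbb{R}^2$, hence contained in the vanishing locus of some nonzero $P$. Alternatively, decomposing $\Gamma$ into finitely many basic semialgebraic pieces $\{Q_1 = \cdots = Q_k = 0,\ R_1 > 0, \ldots, R_\ell > 0\}$, at least one such piece must cover $\{(t,f(t)) : 0 < t < \delta\}$ for small $\delta > 0$ (since semialgebraic subsets of $\mathbb{R}$ are finite unions of points and intervals); being one-dimensional in $\mathbb{R}^2$, this piece cannot be open, so at least one equation $Q_i = 0$ must be present and may be taken as $P$.

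Now I would invoke the classical Puiseux theorem for $P(t,s) = 0$: in a right neighborhood of $t = 0$, the continuous real branches $s = s(t)$ of this equation admit convergent Puiseux expansions $s(t) = \sum_{k \geq 1} c_k t^{\alpha_k}$ with rationals $\alpha_1 < \alpha_2 < \cdots$ sharing a common denominator. Since $f$ is continuous on $(0,\delta)$ and $P(t, f(t)) = 0$ there, $f$ must coincide with one of the finitely many branches on a right neighborhood of $0$. The hypothesis $f(t) \neq 0$ on $(0,\epsilon)$ forces this Puiseux series to be nonzero, so there is a smallest exponent $\alpha := \alpha_{k_0} \in \mathbb{Q}$ with nonzero coefficient $a := c_{k_0}$, yielding $f(t) = a t^\alpha + o(t^\alpha)$ as $t \to 0^+$, as required.

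The main obstacle is the second step, i.e., producing the polynomial $P$. Conceptually this is the point where semialgebraicity genuinely enters: a semialgebraic set of strictly lower dimension than its ambient space must be contained in a proper real algebraic subvariety. This is standard within semialgebraic geometry, but is precisely what sets the argument apart from purely analytic approaches; once $P$ is in hand, everything else is a direct application of Puiseux's theorem together with the continuity of $f$ near $0$.
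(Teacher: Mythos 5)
Your argument is essentially correct, but note that the paper itself gives no proof of this lemma: it is quoted as a standard fact with references (Benedetti--Risler, Bochnak--Coste--Roy, H\`a--Ph\d{a}m, van den Dries--Miller), and the route you describe -- algebraicity of one-variable semialgebraic functions plus the Newton--Puiseux expansion -- is exactly the classical argument carried out in those sources, so there is nothing to compare against inside the paper. Two points deserve tightening. First, in your alternative construction of $P$ you should also rule out the degenerate case where the equations in the basic piece involve the zero polynomial; the Zariski-closure argument you give first is cleaner and suffices, since the Zariski closure of a semialgebraic set has the same dimension, hence is a proper algebraic subset of $\mathbb{R}^2$ annihilated by some nonzero $P$ (and $P$ necessarily has positive degree in $s$, because a nonzero polynomial in $t$ alone cannot vanish on the graph over an interval). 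Second, the assertion that the continuous function $f$ must coincide with a single branch near $0$ needs one more sentence: after shrinking $\delta$, two distinct real root branches of $P(t,\cdot)=0$ can agree at only finitely many $t$, so on $(0,\delta)$ the branches are pairwise either identical or disjoint; the sets $\{t \in (0,\delta) : f(t)=s_j(t)\}$ are then closed, pairwise disjoint, and cover the connected interval $(0,\delta)$, forcing $f$ to equal one branch. With these remarks the proof is complete, and the hypothesis $f(t)\neq 0$ enters exactly where you use it, namely to guarantee that the resulting Puiseux series has a nonzero leading term $a t^{\alpha}$ with $\alpha\in\mathbb{Q}$.
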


\begin{lemma}[monotonicity Lemma]\label{MonotonicityLemma}
Let $f\colon (a, b) \to \mathbb{R}$ be a semialgebraic function.
Then there are $a = a_0 < a_1 < \cdots< a_s < a_{s+1} = b$ such that, for each $i = 0,\ldots, s,$ the restriction $f|_{(a_i,a_{i+1})}$ is analytic, and either constant, or strictly increasing or strictly decreasing.
\end{lemma}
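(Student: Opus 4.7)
The plan is to combine two cornerstone facts of semialgebraic geometry: (a) every semialgebraic subset of $\mathbb{R}$ is a finite union of points and open intervals; and (b) any semialgebraic function $g\colon (a,b)\to\mathbb{R}$ satisfies a nontrivial polynomial identity $P(x,g(x))\equiv 0$ for some nonzero $P\in\mathbb{R}[x,y]$ (obtainable by stratifying the graph and using the Tarski--Seidenberg theorem, or by cell decomposition).

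\emph{Step 1 (analyticity outside a finite set).} I would first fix $P$ as in (b) for $g=f$ and factor $P=\prod_j P_j$ into irreducibles in $\mathbb{R}[x,y]$. Each irreducible factor $P_j$ of positive $y$-degree is coprime in $\mathbb{R}(x)[y]$ to $\partial P_j/\partial y$, which has strictly smaller $y$-degree; hence the $y$-resultant $\mathrm{Res}_y(P_j,\partial P_j/\partial y)$ is a nonzero polynomial in $x$ with a finite zero set $E_j\subset\mathbb{R}$. Letting $D:=(a,b)\cap\bigcup_j E_j$, which is finite, I claim $f$ is real analytic on $(a,b)\setminus D$: at any $x_0\in(a,b)\setminus D$, some factor $P_j$ vanishes at $(x_0,f(x_0))$ with $(\partial P_j/\partial y)(x_0,f(x_0))\ne 0$, and the implicit function theorem supplies an analytic branch of $P_j(x,y)=0$ coinciding with $f$ near $x_0$.

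\emph{Step 2 (monotonicity via the sign of $f'$).} On $(a,b)\setminus D$ the derivative $f'$ is well-defined and semialgebraic (its graph is definable from that of $f$), so the three sets $\{f'>0\}$, $\{f'<0\}$, $\{f'=0\}$ are semialgebraic subsets of $(a,b)$; by (a), each is a finite union of points and open intervals. I would then take the $a_i$ to consist of $a$, $b$, the points of $D$, and all boundary points arising from this sign decomposition. On each resulting open interval $(a_i,a_{i+1})$ the sign of $f'$ is constant, so $f|_{(a_i,a_{i+1})}$ is either strictly increasing, strictly decreasing, or constant, and it is analytic by Step~1.

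The hard part is Step 1: showing that the non-analyticity locus is finite. The key algebraic input is the coprimality of the irreducible factor $P_j$ with $\partial P_j/\partial y$, which converts a potentially dense exceptional set into the finite zero set of a resultant. Once this is in hand --- and once the global polynomial identity (b) is granted, a fact that itself rests on some semialgebraic machinery such as cell decomposition --- Step 2 is mere bookkeeping with (a).
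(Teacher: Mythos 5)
The paper itself gives no proof of this lemma (it is quoted as standard, with references), so the only issue is whether your argument stands on its own; as written, Step~1 has a genuine gap. From $P_j(x_0,f(x_0))=0$ and $(\partial P_j/\partial y)(x_0,f(x_0))\neq 0$ the implicit function theorem produces a unique analytic branch $\varphi$ with $\varphi(x_0)=f(x_0)$ and $P_j(x,\varphi(x))=0$ near $x_0$, but nothing forces $f$ to \emph{coincide} with $\varphi$ near $x_0$: the identity $P(x,f(x))\equiv 0$ only says that, for each $x$ separately, $f(x)$ is one of the roots of $P(x,\cdot)$, and $f$ may jump between branches at points where no branches cross. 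Concretely, on $(-1,1)$ let $f(x)=x$ for $x\ge 0$ and $f(x)=1-x$ for $x<0$; this $f$ is semialgebraic, annihilated by $P(x,y)=(y-x)(y+x-1)$, and $\mathrm{Res}_y(P,\partial P/\partial y)$ vanishes only at $x=1/2$, yet $f$ is discontinuous at $0\notin D$. So the non-analyticity locus of $f$ is not controlled by the discriminant alone, and the conclusion of Step~1 (analyticity on $(a,b)\setminus D$) does not follow. (Two smaller points: when specializing the resultant you must also delete the finitely many zeros of the leading coefficients of the $P_j$ in $y$, and if you want branches of \emph{different} irreducible factors to stay apart you should also delete the zeros of $\mathrm{Res}_y(P_i,P_j)$ for $i\ne j$.)

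The gap is repairable, but only by adding a semialgebraic (not purely algebraic) subdivision step. Enlarge $D$ as above; on each component interval $I$ of $(a,b)\setminus D$ the real roots of the squarefree part of $P(x,\cdot)$ form finitely many pairwise disjoint analytic graphs $\varphi_1<\cdots<\varphi_k$ over $I$ (constant number of real roots, no crossings, no escape to infinity). Each branch $\varphi_i$ is a semialgebraic function, so $A_i:=\{x\in I \mid f(x)=\varphi_i(x)\}$ is a semialgebraic subset of $I$; by your fact (a) it is a finite union of points and open intervals, and the $A_i$ partition $I$. Refining the partition by the endpoints of these finitely many intervals, $f$ agrees with a single analytic branch on each resulting open subinterval, and only then does your Step~2 (sign of $f'$, which is indeed semialgebraic, plus the mean value theorem) go through. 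This repaired argument is in substance the cylindrical cell decomposition proof found in the references the paper cites: a decomposition of $\mathbb{R}^2$ adapted to $\mathrm{graph}(f)$ gives, over each interval of a finite partition of the $x$-axis, finitely many disjoint analytic graphs, and $\mathrm{graph}(f)$ must be exactly one of them since it cannot contain a band or two distinct graphs over the same interval.
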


\begin{lemma}[uniform bounds on fibers]\label{Uniformbounds}
Let $A \subset \mathbb{R}^{m + n}$ be a semialgebraic set. Then there exists an integer number $N$ such that for all $x \in \mathbb{R}^m$ the set $\{y \in \mathbb{R}^n \ | \ (x, y) \in A\}$ has at most $N$ connected components.
\end{lemma}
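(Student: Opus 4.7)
The plan is to derive the uniform bound from a structural finiteness theorem for semialgebraic families. Concretely, I would apply Hardt's semialgebraic triviality theorem to the canonical projection $\pi \colon \mathbb{R}^{m+n} \to \mathbb{R}^m,$ $(x,y) \mapsto x,$ restricted to $A.$ (Equivalently, one can use a cylindrical algebraic decomposition of $\mathbb{R}^{m+n}$ compatible with $A$ and with the projection to the first $m$ coordinates.) This produces a finite semialgebraic partition $\mathbb{R}^m = C_1 \sqcup \cdots \sqcup C_r$ together with semialgebraic sets $F_1,\ldots,F_r \subset \mathbb{R}^n$ and semialgebraic homeomorphisms
$$\pi^{-1}(C_i) \cap A \ \cong \ C_i \times F_i$$
commuting with the projections onto $C_i,$ for each $i = 1, \ldots, r.$

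Given this trivialization, the conclusion falls out at once. For any $x \in C_i,$ the fiber $A_x := \{y \in \mathbb{R}^n \mid (x,y) \in A\}$ is semialgebraically homeomorphic to the model fiber $F_i,$ and hence has exactly the same number of connected components. Since each $F_i$ is itself a semialgebraic set, it has only finitely many connected components, say $N_i \in \mathbb{N}.$ Setting $N := \max_{1 \le i \le r} N_i$ then bounds the number of connected components of $A_x$ uniformly in $x \in \mathbb{R}^m,$ which is exactly the stated conclusion.

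The only substantive step is the invocation of Hardt's triviality theorem (or CAD), which has not been recalled among the preliminaries in this excerpt but is a completely classical result of semialgebraic geometry, well-documented in \cite{Bochnak1998, Dries1996}. Everything after the trivialization is essentially formal, reducing the problem to the elementary fact that a single semialgebraic set has finitely many connected components. A more self-contained alternative would proceed by induction on $m,$ using the Tarski--Seidenberg theorem to cut $\mathbb{R}^m$ into semialgebraic strata on which the fiber topology is locally constant and combining the curve selection and monotonicity lemmas to control how the number of components of $A_x$ can vary along semialgebraic arcs; however, this route is considerably more intricate and does not shorten the argument, so appealing to Hardt's triviality is the standard and cleanest route.
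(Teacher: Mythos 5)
Your argument is correct. Note, however, that the paper does not prove this lemma at all: it is listed among the standard facts of semialgebraic geometry and simply cited from the references (Benedetti--Risler, Bochnak--Coste--Roy, van den Dries--Miller), so there is no in-paper proof to compare against. Your derivation via Hardt's triviality applied to the coordinate projection $\pi\colon \mathbb{R}^{m+n}\to\mathbb{R}^m$ is the standard one and is sound: over each piece $C_i$ of the finite semialgebraic partition the fibers $A_x$ are semialgebraically homeomorphic to a fixed model fiber $F_i$, which, being semialgebraic, has finitely many connected components, and taking the maximum over $i$ gives the uniform bound. One small caveat: the version of Hardt's theorem recalled in the paper (Theorem~\ref{HardtTheorem}) is only the special case of a map into $\mathbb{R}$, whereas you need the general form with target $\mathbb{R}^m$ (triviality over a finite semialgebraic partition of the base); that general form is indeed in the cited sources, so the gap is purely one of referencing, not of substance. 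A marginally more self-contained route within the paper's stated toolkit would be to take a cylindrical algebraic decomposition of $\mathbb{R}^{m+n}$ adapted to $A$ and to the first $m$ coordinates: each cell contained in $A$ has connected (possibly empty) fibers over points of $\mathbb{R}^m$, so the number of connected components of $A_x$ is bounded by the number of cells, with no trivialization theorem needed; but your appeal to Hardt is equally standard and perfectly acceptable.
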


The next theorem (see \cite{Bochnak1998, HaHV2017, Dries1996}) uses the concept of a cell whose definition we omit. We do not need the specific structure of cells described in the formal definition. For us, it will be sufficient to think of a $C^p$-cell of dimension $r$ as of an $r$-dimensional $C^p$-manifold, which is the image of the cube $(0, 1)^r$ under a semialgebraic $C^p$-diffeomorphism. As follows from the definition, an $n$-dimensional cell in $\mathbb{R}^n$ is an open set.

\begin{theorem}[cell decomposition theorem]
Let $A \subset \mathbb{R}^n$ be a  semialgebraic set. Then$,$ for any $p \in \mathbb{N},$ $A$ can be represented as a disjoint union of a finite number of cells of class~$C^p.$
\end{theorem}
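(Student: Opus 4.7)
The plan is to prove the cell decomposition theorem by induction on the ambient dimension $n$, using Theorem~\ref{TarskiSeidenbergTheorem} (Tarski--Seidenberg) as the crucial closure-under-projection tool. The overall scheme is the classical construction of a cylindrical algebraic decomposition adapted to the polynomials defining $A$. For the base case $n = 1$, write $A$ as a finite Boolean combination of sets $\{p = 0\}$ and $\{p > 0\}$ for finitely many univariate polynomials; each such polynomial has only finitely many real roots, so $A$ is automatically a finite disjoint union of isolated points and open intervals, i.e.\ $0$-cells and $1$-cells of class $C^\infty$.

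For the inductive step, assume the theorem in $\mathbb{R}^{n-1}$, and express $A$ in terms of finitely many polynomials $f_1, \dots, f_k \in \mathbb{R}[x_1, \dots, x_n]$, viewed as polynomials in $x_n$ with coefficients in $\mathbb{R}[x_1, \dots, x_{n-1}]$. Using subresultants together with Thom's lemma, one produces an auxiliary finite family $g_1, \dots, g_\ell \in \mathbb{R}[x_1, \dots, x_{n-1}]$ with the following property: on any connected subset of $\mathbb{R}^{n-1}$ on which the sign pattern of all $g_j$ is constant, each $f_i$ has a constant number of real roots in $x_n$, all simple, and these roots vary continuously (hence, by the implicit function theorem, as $C^p$ semialgebraic functions) in $x' = (x_1, \dots, x_{n-1})$ without crossing. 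Applying the inductive hypothesis to the semialgebraic data $\{g_j = 0\}$ and $\{g_j > 0\}$ yields a finite cell decomposition $\{D_\beta\}$ of $\mathbb{R}^{n-1}$ into $C^p$-cells on each of which all $g_j$ have constant sign.

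On a fixed cell $D = D_\beta$, let $\xi_1(x') < \cdots < \xi_r(x')$ denote the resulting continuous semialgebraic $C^p$ real roots of $f_1 \cdots f_k$ (viewed as a polynomial in $x_n$). These partition the cylinder $D \times \mathbb{R}$ into pieces of two kinds: the graphs $\{(x', \xi_s(x')) : x' \in D\}$, each $C^p$-diffeomorphic to $D$ via projection; and the open ``bands'' $\{(x', t) : \xi_s(x') < t < \xi_{s+1}(x')\}$, each $C^p$-diffeomorphic to $D \times (0,1)$ (with the natural conventions for the unbounded bottom and top bands, using an arctangent-type semialgebraic diffeomorphism). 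On every such piece all of $f_1, \dots, f_k$ have constant sign, so $A$ is exactly a union of some of these pieces; taking this union over all $D_\beta$ produces the desired finite disjoint $C^p$-cell decomposition of $A$, and by Tarski--Seidenberg every object constructed along the way is semialgebraic.

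The main obstacle is the construction of the auxiliary family $g_1, \dots, g_\ell$ together with the verification that, over each stratum where the $g_j$ have constant sign, the real roots of each $f_i$ in $x_n$ are simple and depend in a $C^p$ semialgebraic way on $x'$. This is precisely where Thom's lemma plays the central role: a sign condition on a polynomial together with sign conditions on all its successive derivatives in $x_n$ cuts out a connected set, so each such Thom-stratum yields a single continuous semialgebraic branch of a root. A further refinement of the decomposition (obtained by adjoining discriminants and leading coefficients to the $g_j$) ensures that the roots are simple, after which the implicit function theorem promotes continuous dependence on $x'$ to $C^p$ regularity and establishes the cell structure of the graphs.
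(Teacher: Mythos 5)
The paper does not prove this theorem: it is quoted as a standard fact of semialgebraic geometry with a pointer to \cite{Bochnak1998, HaHV2017, Dries1996}. The argument you sketch --- induction on $n$ via a cylindrical decomposition adapted to the defining polynomials, with Theorem~\ref{TarskiSeidenbergTheorem} guaranteeing semialgebraicity of everything constructed --- is precisely the proof given in those references, so in outline you are following the same route as the source the paper relies on. The base case, the cylinder/graph/band structure over each cell $D$ of the base decomposition, and the final assembly of $A$ from sign-invariant pieces are all correct as stated.

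There is, however, one genuine flaw in the inductive step as you formulate it: you cannot arrange that over each stratum the real roots of each $f_i$ are \emph{simple}. Take $f_i(x',x_n)=(x_n-x_1)^2$: it has a double root over every $x'$, and adjoining discriminants and leading coefficients to the $g_j$ does not change this (the discriminant vanishes identically, so sign-invariance gives no refinement). What the subresultant construction actually delivers is that over each connected sign-invariant stratum the \emph{number of distinct} roots and their \emph{multiplicities} are constant; $C^p$ (indeed analytic) dependence of a root of constant multiplicity $m$ on $x'$ then follows by applying the implicit function theorem not to $f_i$ but to its $(m-1)$-st derivative in $x_n$ (equivalently, to the square-free part of $f_i$), of which that root is simple. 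You also need the standard case analysis for degree drops, i.e.\ strata on which a leading coefficient, or $f_i$ itself, vanishes identically on the cylinder. These are routine repairs found in the cited texts, but as written the ``all roots simple'' claim is false and the implicit-function-theorem step does not apply to $f_i$ directly; the rest of your argument (Thom's lemma for coherent labelling of root branches, the diffeomorphism of graphs with $D$ and of bands with $D\times(0,1)$) is sound once this is fixed.
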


By the cell decomposition theorem, for any $p \in \mathbb{N}$ and any nonempty semialgebraic subset $A$ of $\mathbb{R}^n,$ we can write $A$ as a disjoint union of finitely many semialgebraic $C^p$-manifolds of different dimensions. The {\em dimension} $\dim A$ of a nonempty semialgebraic set $A$ can thus be defined as the dimension of the manifold of highest dimension of its decomposition. This dimension is well defined and independent of the decomposition of $A.$ By convention, the dimension of the empty set is taken to be negative infinity. We will need the following result (see \cite{Bochnak1998, HaHV2017, Dries1996}).

\begin{lemma} \label{DimensionLemma}
Let $A \subset \mathbb{R}^n$ be a nonempty semialgebraic set. Then$,$ $\dim(\mathrm{cl} A\setminus A) < \dim A.$ In particular$,$ $\dim(\mathrm{cl} A) = \dim A.$
\end{lemma}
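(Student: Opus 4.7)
The plan is to reduce the statement to the cell decomposition theorem in a slightly strengthened form, applied so as to be compatible with $A$. First I would invoke the cell decomposition theorem to obtain a finite partition $\mathbb{R}^n = \bigsqcup_{i=1}^N C_i$ into $C^1$-cells such that $A$ is a union of some of them: $A = \bigsqcup_{i \in I} C_i$. This is the standard compatible version: one applies the cell decomposition theorem to the two sets $\{A, \mathbb{R}^n \setminus A\}$ simultaneously (which can be done by repeated refinement). Then $\dim A = \max_{i \in I} \dim C_i =: d$.

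Next I would use the \emph{frontier property} of cell decompositions: for every cell $C_i$ in the partition, the closure $\mathrm{cl}(C_i)$ is a disjoint union of some cells $C_j$ of the same partition, and any such $C_j$ with $C_j \neq C_i$ has $\dim C_j < \dim C_i$. This is obtained, if necessary, by refining the initial decomposition; the essential content is that the frontier of an $r$-cell, being semialgebraic and nowhere containing an open piece of an $r$-manifold contained in the cell, decomposes into cells of dimension strictly less than $r$. This is standard (see \cite{Bochnak1998, Dries1996}) and I would cite it rather than re-prove it.

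Granted the frontier property, the argument is short. Since the union defining $A$ is finite, $\mathrm{cl}A = \bigcup_{i \in I} \mathrm{cl}(C_i)$, and by the frontier property this is again a disjoint union of cells of the partition, say $\mathrm{cl}A = \bigsqcup_{j \in J} C_j$ with $J \supseteq I$. Hence
\[
\mathrm{cl}A \setminus A \;=\; \bigsqcup_{j \in J \setminus I} C_j.
\]
For each $j \in J \setminus I$ I would pick some $i \in I$ with $C_j \subset \mathrm{cl}(C_i)$; since $C_j \cap C_i \subset C_j \cap A = \emptyset$, we have $C_j \neq C_i$, and the frontier property yields $\dim C_j < \dim C_i \leq d$. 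Taking the maximum over $j$ gives $\dim(\mathrm{cl}A \setminus A) < d = \dim A$. The ``in particular'' part then follows because $\mathrm{cl}A = A \sqcup (\mathrm{cl}A \setminus A)$, so its dimension equals $\max\{\dim A, \dim(\mathrm{cl}A \setminus A)\} = \dim A$.

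The main obstacle is the frontier property, which is the one nontrivial ingredient not literally stated in the excerpt. If I wanted to avoid invoking it as a black box, I would argue directly on a single cell: a $C^1$-cell $C$ of dimension $r$ is a semialgebraic $C^1$-manifold, and one can apply the cell decomposition theorem to the semialgebraic set $\mathrm{cl}(C) \setminus C$. Any $r$-dimensional cell $C'$ appearing in the decomposition of $\mathrm{cl}(C)\setminus C$ would be an open subset of $\mathrm{cl}(C)$ in some $r$-dimensional tangent direction; combined with the curve selection lemma applied at points of $C'$ to reach $C$, one derives a contradiction with $C' \cap C = \emptyset$ and the local manifold structure of $C$. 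This confines the frontier to dimension at most $r-1$, which is exactly what is needed to finish the argument as above.
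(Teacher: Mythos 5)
The paper does not actually prove this lemma: it is quoted as a known fact with references to \cite{Bochnak1998, HaHV2017, Dries1996}. Measured against that, your main route proves very little on its own. The reduction from a general semialgebraic $A$ to cells via a compatible decomposition is fine, but the ``frontier property'' you invoke --- in particular the strict dimension drop $\dim C_j < \dim C_i$ for every cell $C_j \subset \mathrm{cl}(C_i)\setminus C_i$ --- is exactly the lemma applied to $A = C_i$. In the standard references, decompositions or stratifications satisfying the frontier condition are constructed \emph{using} the inequality $\dim(\mathrm{cl}A\setminus A) < \dim A$, not the other way around, so as a self-contained argument your proof is essentially circular; as a citation-based argument it cites an ingredient at least as deep as the statement, in which case one may as well cite the lemma itself, as the paper does.

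The fallback sketch for the single-cell case, which is where the genuine content would have to live, has a real gap. The assertion that an $r$-dimensional cell $C' \subset \mathrm{cl}(C)\setminus C$ ``would be an open subset of $\mathrm{cl}(C)$'' is precisely the nontrivial point: if $C'$ contained a nonempty relatively open subset of $\mathrm{cl}(C)$, then density of $C$ in $\mathrm{cl}(C)$ would contradict $C'\cap C=\emptyset$ at once, so all the difficulty is hidden in that claim. Neither the curve selection lemma applied at points of $C'$ nor the local manifold structure of $C$ yields it: for a merely $C^1$, non-semialgebraic configuration the claim is false (a curve spiralling onto a circle has a $1$-dimensional frontier that is nowhere open in the closure, even though every frontier point is reached by a continuous arc from the curve), so any correct argument must exploit semialgebraic tameness quantitatively --- e.g.\ induction on $n$ through the cylindrical structure of cells, or a generic linear projection to $\mathbb{R}^r$ with uniformly finite fibers in the spirit of Lemma \ref{Uniformbounds} combined with Theorem \ref{HardtTheorem} --- and your sketch indicates none of this. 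Either supply such an argument for the cell case, or simply quote the result from \cite{Bochnak1998} or \cite{Dries1996}, which is what the paper does.
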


In the sequel we will make use of Hardt's semialgebraic triviality. We present a particular case--adapted to our needs--of a more general result: see \cite{Bochnak1998, Hardt1980, Dries1996} for the statement in its full generality.

\begin{theorem}[Hardt's semialgebraic triviality] \label{HardtTheorem}
Let $S$ be a semialgebraic set in $\mathbb{R}^n$ and $f \colon S \rightarrow\mathbb{R}$ a continuous semialgebraic map.
Then there are finitely many points $-\infty = t_0 < t_1 < \cdots < t_k = +\infty$ such that $f$ is semialgebraically trivial over each the interval $(t_i, t_{i + 1}),$ that is, there exists a semialgebraic set $F_i \subset \mathbb{R}^n$ and a semialgebraic homeomorphism
$h_i \colon f^{-1} (t_i, t_{i + 1}) \rightarrow (t_i, t_{i + 1}) \times F_i$ such that the composition $h_i$ with the projection $(t_i, t_{i + 1}) \times F_i \rightarrow (t_i, t_{i + 1}), (t, x) \mapsto t,$  is equal to the restriction of $f$ to $f^{-1} (t_i, t_{i + 1}).$
\end{theorem}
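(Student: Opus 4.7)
The plan is to deduce Hardt's triviality from the cell decomposition theorem stated above, combined with an induction on the ambient dimension $n$. In outline: first, produce a cell decomposition of $S$ that is simultaneously compatible with $f$; second, read off the critical values $t_1 < \cdots < t_{k-1}$; third, on each slab $f^{-1}(I)$ with $I = (t_i,t_{i+1})$ build a semialgebraic homeomorphism $h_i$ onto $I \times F_i$ whose first coordinate is $f$.

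First, I would apply the cell decomposition theorem to a cell decomposition of $\mathbb{R}^n \times \mathbb{R}$ compatible with the graph of $f$ (viewed as a subset of $S \times \mathbb{R}$) and with the projection $\pi \colon \mathbb{R}^n \times \mathbb{R} \to \mathbb{R}$ onto the last coordinate. By the standard refinement for cylindrical decompositions, one may assume that each cell $C \subset S$ has the property that $f|_C$ is either constant or a homeomorphism onto an open interval of $\mathbb{R}$. The critical set $T := \{t_1,\dots,t_{k-1}\}$ is then defined as the union of the constant values $f(C)$ and the endpoints of the intervals $f(C)$ over all cells $C$; this is a finite subset of $\mathbb{R}$ by finiteness of the decomposition. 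Setting $t_0 := -\infty$ and $t_k := +\infty$ completes the first stage.

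Second, fix $I = (t_i,t_{i+1})$ and consider the cells $C$ with $f(C) \supset I$; by construction $f^{-1}(I)$ is the disjoint union of their restrictions $C \cap f^{-1}(I)$. I would argue by induction on $n$. For $n = 1$, the set $S$ is a finite disjoint union of points and open intervals on which $f$ is monotone by the monotonicity lemma, and the trivialization is produced by hand. For $n > 1$, let $\sigma \colon \mathbb{R}^n \times \mathbb{R} \to \mathbb{R}^{n-1} \times \mathbb{R}$ forget the last coordinate of $\mathbb{R}^n$. Applying the inductive hypothesis to $\sigma(\mathrm{graph}\, f)$ and the projection to the $\mathbb{R}$-factor yields, after possibly enlarging $T$, a trivialization on the base. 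Each cell $C$ of $S$ appearing in $f^{-1}(I)$ is, over its projection, either the graph of a continuous semialgebraic function or the open band strictly between two such graphs. Pick $t^* \in I$, let $F_i := f^{-1}(t^*)$, and build $h_i$ cell-by-cell by fiber-wise linear interpolation (in the variable $t = f$) of these bounding graphs; this transports each cell over $I$ to the product of $I$ with its slice at $t^*$.

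Third, to obtain a global homeomorphism one must glue these cell-wise trivializations compatibly across the shared boundary cells. Semialgebraicity is automatic because every ingredient (graphs of semialgebraic functions, affine interpolation in $t$, taking slices) is semialgebraic, and bijectivity onto $I \times F_i$ follows from the disjointness of the cells. The inverse $h_i^{-1}$ is constructed identically by reversing the interpolation. The main obstacle is the continuity of $h_i$ across the boundaries of cells of different dimensions: this is where the compatibility of the cell decomposition is essential, since it guarantees that a cell's frontier is again a union of cells of the decomposition, whose interpolated trivializations match on the overlap. A clean way to handle the closures is by a secondary induction on $\dim S$, using Lemma~\ref{DimensionLemma} to ensure that $\dim(\mathrm{cl}\, C \setminus C) < \dim C$ so the lower-dimensional strata are trivialized first and the top-dimensional cells are trivialized in a way that extends continuously; this is the most delicate point of the argument, and is the step I would expect to demand the most care.
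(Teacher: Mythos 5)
The paper does not actually prove Theorem~\ref{HardtTheorem}: it is quoted, in a form adapted to the paper's needs, from the literature (Hardt, Bochnak--Coste--Roy, van den Dries--Miller), so there is no internal argument to compare yours with. Judged on its own, your sketch follows the classical outline (cell decomposition plus induction on dimension) but leaves unproved exactly the steps that constitute the content of the theorem. First, the refinement you assert at the outset is false as stated: a cell $C$ of dimension at least $2$ cannot have $f|_C$ a homeomorphism onto an interval; what a decomposition compatible with $f$ (or with the graph of $f$, organized over the $t$-axis) gives you is only that each $f(C)$ is a point or an open interval with endpoints in a fixed finite set. Second, and more seriously, the cell-by-cell ``fiber-wise linear interpolation'' followed by gluing is precisely where the difficulty lives. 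You need the slices of all the cells over $I$ at the chosen $t^*$ to assemble coherently into one fiber $F_i$, the cellwise maps to be bijective onto $I\times F_i$ jointly, and, above all, the trivialization already built on the frontier strata of a cell to extend continuously (indeed homeomorphically) over the open cell. Your appeal to a secondary induction on $\dim S$ via Lemma~\ref{DimensionLemma} only orders the strata; it does not supply this extension, which is an isotopy-extension-type statement that the standard proofs obtain either from the semialgebraic triangulation theorem or by strengthening the inductive statement to triviality \emph{compatible with finitely many prescribed semialgebraic subsets} (so that compatibility on frontiers is part of the induction rather than an afterthought).

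There is also a structural problem with the induction on $n$ as you set it up. Your inductive hypothesis trivializes continuous semialgebraic maps into $\mathbb{R}$ over intervals. But after applying it to $\sigma(\mathrm{graph}\, f)\subset\mathbb{R}^{n-1}\times\mathbb{R}$, what remains is to trivialize the forgetful projection $\sigma$ over the already-trivialized slab, i.e.\ over a base of the form $I\times F'$, which in general has dimension $\ge 2$; a statement about triviality over intervals of $\mathbb{R}$ does not cover this, so the induction does not close. This is why the versions of Hardt's theorem in the references are formulated for maps into $\mathbb{R}^m$ (triviality over a finite semialgebraic partition of the base), or the proof is run with parameters from the start. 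In short, the overall strategy is the right one, but the compatibility/gluing step and the form of the inductive hypothesis are genuine gaps, not routine verifications; as the paper treats this result as a citation, the honest options are either to cite it as the paper does or to commit to the full (and substantially longer) argument from the literature.
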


We also need the following lemma. 
\begin{lemma}\label{Lemma23}
Let $U$ and $V$ be open semialgebraic sets in $\mathbb{R}^n$ and let $f \colon U \to V$ be a continuous semialgebraic map.
If $f$ is open (i.e., $f$ maps open sets to open sets), then for all $y \in V$ the inverse image $f^{-1}(y)$ is finite.
\end{lemma}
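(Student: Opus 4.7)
The plan is to argue by contradiction. Suppose $f^{-1}(y_0)$ is infinite for some $y_0 \in V$. Since $f^{-1}(y_0)$ is semialgebraic (by the Tarski--Seidenberg theorem), being infinite forces $\dim f^{-1}(y_0) \geq 1$.

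I would first apply Hardt's semialgebraic triviality theorem in its general form for semialgebraic targets (the version stated above is the special case of target $\mathbb{R}$; the references listed cover the general version). This yields a finite semialgebraic partition $V = V_1 \sqcup \cdots \sqcup V_k$ together with semialgebraic homeomorphisms $h_i \colon f^{-1}(V_i) \to V_i \times F_i$ satisfying $f = \pi_1 \circ h_i$. Since $V$ is open in $\mathbb{R}^n$ we have $\dim V = n$, so some stratum has dimension $n$. For any such stratum $V_j$, the inclusion $f^{-1}(V_j) \subset U \subset \mathbb{R}^n$ forces $\dim V_j + \dim F_j \leq n$, whence $\dim F_j = 0$ and $F_j$ is finite, say of cardinality $N$. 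The union of all $n$-dimensional strata is open and dense in $V$, so one may select such a $V_j$ satisfying $y_0 \in \mathrm{cl} V_j$.

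Next, I would invoke the curve selection lemma at $y_0 \in \mathrm{cl} V_j \setminus V_j$ to obtain a continuous semialgebraic curve $\gamma \colon [0, \epsilon) \to V$ with $\gamma(0) = y_0$ and $\gamma(t) \in V_j$ for $t \in (0, \epsilon)$. The trivialization provides continuous semialgebraic sections $\sigma_1, \ldots, \sigma_N \colon V_j \to U$ with $f^{-1}(y) = \{\sigma_1(y), \ldots, \sigma_N(y)\}$ for every $y \in V_j$. Setting $\phi_i(t) := \sigma_i(\gamma(t))$ yields $N$ continuous semialgebraic curves in $U$ satisfying $f(\phi_i(t)) = \gamma(t)$. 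Applying the monotonicity lemma to each coordinate, each $\phi_i$ admits a well-defined limit $x_i^\ast \in \mathbb{R}^n \cup \{\infty\}$ as $t \to 0^+$. Define $L := \{x_i^\ast : 1 \leq i \leq N\} \cap U$; then $|L| \leq N$.

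The decisive step, and the main obstacle, is to conclude that $f^{-1}(y_0) \subset L$. Fix $x_0 \in f^{-1}(y_0)$ and $\delta > 0$; openness of $f$ ensures that $f(\mathbb{B}_\delta(x_0) \cap U)$ is an open subset of $V$ containing $y_0$, hence contains $\gamma(t)$ for all sufficiently small $t > 0$. Consequently, for each such $t$, some $\phi_i(t)$ lies in $\mathbb{B}_\delta(x_0)$. Letting $\delta = 1/m$ and invoking pigeonhole over the $N$ available indices, one extracts a fixed index $i^\ast$ and a sequence $t_m \to 0^+$ with $\phi_{i^\ast}(t_m) \to x_0$; comparing with the limit $x_{i^\ast}^\ast$ of $\phi_{i^\ast}$ forces $x_0 = x_{i^\ast}^\ast \in L$. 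Hence $f^{-1}(y_0)$ is contained in the finite set $L$, contradicting $\dim f^{-1}(y_0) \geq 1$ and completing the proof.
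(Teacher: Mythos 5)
Your argument is correct in substance, but it is a genuinely different route from the paper's: the paper does not prove Lemma~\ref{Lemma23} at all, it simply cites \cite[Theorem~3.10]{Denkowski2017} and \cite[Proposition, page~298]{Gamboa1996}. You instead give an essentially self-contained proof from the toolkit of Section~\ref{SectionPreliminary}: Hardt triviality over a finite semialgebraic partition of $V$, the dimension count $\dim V_j+\dim F_j\le n$ forcing finite fibers over the $n$-dimensional strata, and then curve selection plus the monotonicity lemma to transport this finiteness to a point $y_0$ lying only in the closure of such a stratum, with openness of $f$ used exactly once, to show every $x_0\in f^{-1}(y_0)$ is a limit along the curve of points of the nearby fibers and hence equals one of the finitely many limits $x_i^\ast$. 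This buys a transparent proof inside the paper's own framework and even a local bound $\#f^{-1}(y_0)\le N$; the cost is that you need Hardt's theorem for maps into $\mathbb{R}^n$, not just the $\mathbb{R}$-valued version stated as Theorem~\ref{HardtTheorem} (legitimate, since the paper points to the general statement in the references, but it should be said explicitly). The cited external results, by contrast, deliver the finiteness (with finer information about branch loci) without this work.

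A few small repairs you should make. (1) The union of the $n$-dimensional strata need not be \emph{open}; but density is all you use, and density does hold because the remaining strata have dimension $<n$, so by finiteness of the partition $y_0$ lies in $\mathrm{cl}\,V_j$ for some $n$-dimensional $V_j$. (2) Treat the degenerate cases: if $y_0\in V_j$ itself, the fiber is homeomorphic to $F_j$ and you are done without curve selection; and if $F_j=\emptyset$, openness of $f$ at any $x_0\in f^{-1}(y_0)$ already yields a contradiction, since the fibers over $\gamma(t)$ would be empty while $\gamma(t)\in f(U)$ for small $t$. (3) In the paper's notation $\mathbb{B}_\delta(x_0)$ is the \emph{closed} ball, so apply openness of $f$ to its interior (intersected with $U$). (4) The dimension count tacitly uses that semialgebraic homeomorphisms preserve dimension and that $\dim(V_j\times F_j)=\dim V_j+\dim F_j$; both are standard but deserve a citation, as the paper's preliminaries do not state them.
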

\begin{proof}
See \cite[Theorem~3.10]{Denkowski2017} or \cite[Proposition, page~298]{Gamboa1996}).
\end{proof}

\section{Characterizations of the openness} \label{Section3}

We start with a result which provides a relation between the openness of (not necessarily semialgebraic) set-valued maps and the continuity of their inverse images;
see also \cite[Lemma~3.8]{Denkowski2017} and \cite[Theorem~2.3]{Rockafellar1985}.

\begin{lemma}\label{Lemma31}
Let $F\colon \mathbb{R}^n \rightrightarrows \mathbb{R}^m$ be a set-valued map with closed graph.
Then the following are equivalent$:$
\begin{enumerate}[{\rm (i)}]
  \item $F$ is an open map from $\mathrm{dom}F$ into $\mathrm{range} F;$
  \item the function
\begin{align*}
\mathbb{R}^n \times \mathrm{range} F \to \mathbb{R}, \ \ (x, y) \mapsto {\rm dist}(x, F^{-1}(y)),
\end{align*}
is continuous$.$
\end{enumerate}
\end{lemma}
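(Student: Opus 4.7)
The plan is to pivot on the classical observation that $F$ being an open map from $\mathrm{dom} F$ into $\mathrm{range} F$ is equivalent to the inverse map $F^{-1}$ being lower semicontinuous at every point of $\mathrm{range} F = \mathrm{dom} F^{-1}$; this follows by unwinding the definitions, using that a base of relatively open sets in $\mathrm{dom} F$ has the form $V \cap \mathrm{dom} F$ for $V \subset \mathbb{R}^n$ open. With this reformulation in hand, what remains is to relate lower semicontinuity of the set-valued map $y \mapsto F^{-1}(y)$ to joint continuity of the single-valued function $d(x,y) := \mathrm{dist}(x, F^{-1}(y))$ on $\mathbb{R}^n \times \mathrm{range} F$.

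For the easier direction (ii) $\Rightarrow$ (i), I would fix $U$ open in $\mathrm{dom} F$ and $y_0 \in F(U)$, pick $x_0 \in U$ with $y_0 \in F(x_0)$ so that $d(x_0, y_0) = 0$, and choose $\epsilon > 0$ with $\mathbb{B}_\epsilon(x_0) \cap \mathrm{dom} F \subset U$. Continuity of $d$ at $(x_0, y_0)$ then yields $\delta > 0$ such that $d(x_0, y) < \epsilon$ for every $y \in \mathbb{B}_\delta(y_0) \cap \mathrm{range} F$; since this distance is an infimum strictly less than $\epsilon$, one finds $x \in F^{-1}(y)$ with $\|x - x_0\| < \epsilon$, hence $y \in F(x) \subset F(U)$, showing $F(U)$ is open in $\mathrm{range} F$. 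Note that this direction does not use the closed-graph hypothesis.

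The nontrivial direction (i) $\Rightarrow$ (ii) decomposes into upper and lower semicontinuity of $d$ at an arbitrary $(x_0, y_0) \in \mathbb{R}^n \times \mathrm{range} F$ along a sequence $(x_k, y_k) \to (x_0, y_0)$ with $y_k \in \mathrm{range} F$; note that $d$ is finite there because each $F^{-1}(y_k)$ is nonempty. For upper semicontinuity, given $\eta > 0$, pick $x^* \in F^{-1}(y_0)$ with $\|x_0 - x^*\| < d(x_0, y_0) + \eta$; applying the lower semicontinuity of $F^{-1}$ at $y_0$ to the open ball of radius $\eta$ around $x^*$ produces, for all sufficiently large $k$, points $x^*_k \in F^{-1}(y_k)$ with $\|x^*_k - x^*\| < \eta$, and the triangle inequality gives $\limsup_k d(x_k, y_k) \leq d(x_0, y_0) + 2\eta$. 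For lower semicontinuity, pass to a subsequence realizing $\liminf_k d(x_k, y_k) =: \ell$, choose near-minimizers $x^*_k \in F^{-1}(y_k)$ with $\|x_k - x^*_k\| \leq d(x_k, y_k) + 1/k$, extract a further convergent subsequence $x^*_k \to x^*$ by boundedness, and invoke closedness of $\mathrm{graph} F$ to conclude $x^* \in F^{-1}(y_0)$, whence $d(x_0, y_0) \leq \|x_0 - x^*\| = \ell$. The main subtlety lies in this last step: the closed-graph hypothesis is exactly what promotes the limit $x^*$ into $F^{-1}(y_0)$, and without it the lower-semicontinuity inequality for $d$ can fail even when the openness of $F$ does hold.
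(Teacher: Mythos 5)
Your proof is correct, and it reaches the same two pillars as the paper's argument --- openness of $F$ supplies points of $F^{-1}(y^k)$ near a chosen point of $F^{-1}(y_0)$ (the upper estimate), while the closed graph lets limits of near-minimizers land in $F^{-1}(y_0)$ (the lower estimate) --- but it is organized differently. The paper does not pass through the equivalence with lower semicontinuity of $F^{-1}$: for (i) $\Rightarrow$ (ii) it fixes $x=a$ and proves continuity of $y \mapsto \mathrm{dist}(a,F^{-1}(y))$ alone (the reduction to joint continuity being the implicit, uniform $1$-Lipschitz dependence of the distance on $a$), and it obtains both the upper and lower bounds in a single squeezing argument: it takes \emph{exact} nearest points $x^k \in F^{-1}(y^k)$, $x^* \in F^{-1}(y^*)$ (existence via the closed graph), uses openness of $F(U)$ for an \emph{arbitrary} bounded neighbourhood $U$ of $x^*$ to majorize $\mathrm{dist}(a,F^{-1}(y^k))$ by $\|a-\bar x^k\|$ with $\bar x^k\in U$, and then lets $U$ shrink. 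Your version instead pivots on the (correctly justified) reformulation ``$F$ open $\Leftrightarrow$ $F^{-1}$ lower semicontinuous on $\mathrm{range}F$,'' treats joint continuity directly, and cleanly separates upper semicontinuity of $d$ (which uses only openness, via near-minimizers and the triangle inequality) from lower semicontinuity (which uses only the closed graph, via near-minimizers, boundedness along the liminf subsequence, and passage to the limit in the graph). This split makes the role of each hypothesis transparent, and your direct infimum argument for (ii) $\Rightarrow$ (i) avoids the closed-graph hypothesis that the paper's contradiction argument uses to extract exact nearest points --- a small sharpening. Conversely, the paper's single squeeze is shorter on the page, at the cost of leaving the reduction to joint continuity and the separation of the two estimates implicit. (Your closing aside, that lower semicontinuity of $d$ can fail for open maps without closed graph, is plausible but not substantiated; it is not needed for the proof.)
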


\begin{proof}
(i) $\Rightarrow$ (ii).
Fix $a \in \mathbb{R}^n.$ It suffices to show that the function
\begin{align*}
\mathrm{range} F \to \mathbb{R}, \ \ y \mapsto {\rm dist}(a, F^{-1}(y)),
\end{align*}
is continuous. To this end, take any $y^* \in \mathrm{range} F$ and let $\{y^k\}$ be a sequence in $\mathrm{range} F$ such that $y^k \to y^*$ as $k \to \infty.$
We will prove
\begin{align*}
\lim_{k\to\infty}{\rm dist}(a, F^{-1}(y^k)) = {\rm dist}(a, F^{-1}(y^*)).
\end{align*}

Indeed, as the graph of $F$ is closed, the sets $F^{-1}(y^k)$ and $F^{-1}(y^*)$ are closed. It follows easily that there exist points $x^k \in F^{-1}(y^k)$ and $x^* \in F^{-1}(y^*)$ such that
\begin{align*}
\|a - x^k\|={\rm dist}(a, F^{-1}(y^k)) \quad \textrm{ and } \quad  \|a - x^*\|={\rm dist}(a, F^{-1}(y^*)).
\end{align*}
Take, arbitrarily, an open and bounded neighbourhood $U$ of $x^*$ in $\textrm{dom}F.$
By assumption, $F(U)$ is open in $\mathrm{range} F.$ Hence $y^k \in F(U)$ for all large $k.$
For all such $k,$ there exists $\bar x^k \in U$ such that $y^k \in F(\bar x^k),$ and so
\begin{align*}
\|a - x^k\| = {\rm dist}(a, F^{-1}(y^k)) \leq \|a - \bar x^k\|.
\end{align*}
Since the sequence $\{\bar x^k\}$ is contained in the bounded set $U,$ it follows that the sequences $\{x^k\}$ and $\{{\rm dist}(a, F^{-1}(y^k))\}$ are bounded.
Let $\{y^{k_l}\}$ be a subsequence of $\{y^k\}$ such that the limit $\lim_{l\to\infty} {\rm dist}(a, F^{-1}(y^{k_l}))$ exists.
It suffices to show that this limit equals to $\|a-x^*\|.$

Choosing subsequences if necessary, we may assume that the following limits exist:
\begin{align*}
x := \lim_{l\to\infty}x^{k_l} \ \textrm{ and } \ \bar x := \lim_{l\to\infty}\bar x^{k_l}.
\end{align*}
Clearly, $\bar x \in \mathrm{cl}U$-the closure of $U$ and, since the graph of $F$ is closed, it holds that
\begin{align*}
y^*\in F(x)\cap F(\bar x).
\end{align*}
Consequently,
\begin{eqnarray*}
\|a - x^*\| = {\rm dist}(a, F^{-1}(y^*)) &\leq& \|a - x\| = \lim_{l \to\infty}\|a - x^{k_l}\| \\ &\leq& \lim_{l\to\infty}\|a - \bar x^{k_l}\| = \|a - \bar x\|.
\end{eqnarray*}
Therefore,
\begin{align*}
\|a - x^*\| \leq \lim_{l\to\infty}{\rm dist}(a, F^{-1}(y^{k_l})) \leq \|a - \bar x\|.
\end{align*}
As $\bar x \in \mathrm{cl}U$ and $U$ is an {\em arbitrary} open neighbourhood of $x^*$ in $\mathrm{dom}F,$ we get easily
\begin{align*}
\|a - x^*\| = \lim_{l\to\infty}{\rm dist}(a, F^{-1}(y^{k_l})).
\end{align*}

(ii) $\Rightarrow$ (i).
Let $U$ be an open set in $\mathrm{dom}F$ and $y^* \in F(U),$ let $x^* \in U$ with $y^*\in F(x^*).$
We proceed by contradiction.
Assume that no neighbourhood of $y^*$ (relative to $\mathrm{range} F$) is contained in $F(U).$
Then there is a sequence $\{y^k\}$ in $\mathrm{range} F \setminus F(U)$ such that $y^k \to y^*$ as $k \to \infty.$
By assumption, then
\begin{align*}
\lim_{k\to\infty}{\rm dist}(x^*, F^{-1}(y^k)) = {\rm dist}(x^*, F^{-1}(y^*)) = 0.
\end{align*}
On the other hand, for each $k,$ the set $F^{-1}(y^k)$ is closed because the graph of $F$ is closed.
Consequently, there exists $x^k \in F^{-1}(y^k)$ such that
\begin{align*}
\|x^* - x^k\|={\rm dist}(x^*, F^{-1}(y^k)) .
\end{align*}
It follows that $x^k \to x^*$ as $k \to \infty$ and hence $x^k \in U$ for all large $k.$
For all such $k,$ $y^k \in F(x^k) \subset F(U),$ which is a contradiction.
\end{proof}

We also need the following result, which is a generalization of the \L ojasiewicz inequality (see, for example, \cite{Bochnak1998, HaHV2017}).

\begin{lemma}\label{Lemma32}
Let $K$ be a compact semialgebraic set in $\mathbb{R}^n.$ Let $\phi, \psi\colon K\to \mathbb{R}$ be non-negative$,$ semialgebraic functions satisfying the following conditions$:$
\begin{enumerate}[{\rm (i)}]
  \item $\phi$ is continuous$;$
  \item for any sequence $\{x^k\} \subset K$ converging to $\bar x \in K$ such that $\lim_{k \to \infty} \psi(x^k) = 0,$ it holds that $\psi(\bar{x}) = 0;$ 
  \item $\{x\in K \ | \  \psi(x) = 0\} \subset \{x\in K \ | \  \phi(x) = 0\}.$
  \end{enumerate}
  Then there exist constants $c>0$ and $\alpha>0$ such that
  $$c[\psi(x)]^\alpha \ge \phi(x) \quad \textrm{for all } \quad x \in K.$$
\end{lemma}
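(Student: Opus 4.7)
The plan is to imitate the classical semialgebraic \L ojasiewicz inequality, working around the fact that $\psi$ is not assumed continuous by passing to an auxiliary upper envelope of $\phi$ along the sublevel sets of $\psi$. Concretely, I would introduce
\begin{equation*}
g(t) \;:=\; \sup\bigl\{\phi(x) \,:\, x \in K,\ \psi(x) \le t\bigr\}, \qquad t \ge 0,
\end{equation*}
with the convention that $g(t) = 0$ when the set on the right is empty. Since $K$ is compact and $\phi$ continuous, $\phi$ is bounded on $K$, so $g$ takes finite values, and by Tarski--Seidenberg (Theorem~\ref{TarskiSeidenbergTheorem}) $g$ is a non-negative, non-decreasing, semialgebraic function of $t$. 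If $\phi \equiv 0$ on $K$ there is nothing to prove, so I may assume $M := \max_K \phi > 0$.

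The first substantive step is to show $g(t) \to 0$ as $t \to 0^+$; this is where all three hypotheses enter. Suppose this fails. Then there exist $\varepsilon > 0$ and $t_k \downarrow 0$ with $g(t_k) > \varepsilon$, and therefore points $x^k \in K$ satisfying $\psi(x^k) \le t_k$ and $\phi(x^k) > \varepsilon/2$. By compactness, a subsequence satisfies $x^k \to \bar x \in K$; hypothesis (ii) forces $\psi(\bar x) = 0$, hypothesis (iii) then gives $\phi(\bar x) = 0$, and the continuity of $\phi$ from (i) yields $\phi(\bar x) \ge \varepsilon/2$, a contradiction. Hence $g(t) \to 0$ as $t \to 0^+$.

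With this in hand, the growth dichotomy lemma (Lemma~\ref{GrowthDichotomyLemma}) applied to $g$ on a right neighbourhood of $0$ yields, together with monotonicity, constants $t_0 > 0$, $C_0 > 0$ and $\alpha > 0$ such that $g(t) \le C_0\, t^\alpha$ on $(0, t_0]$ (if $g$ vanishes identically on some $(0, t_0]$, this is immediate; otherwise $g > 0$ on a right neighbourhood of $0$ and the dichotomy gives $g(t) = a t^\alpha + o(t^\alpha)$ with necessarily $a > 0$ and $\alpha > 0$ in view of $g(0^+) = 0$). By definition of $g$, this reads $\phi(x) \le C_0\, \psi(x)^\alpha$ for every $x \in K$ with $0 < \psi(x) \le t_0$, while the degenerate case $\psi(x) = 0$ is handled by (iii). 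To globalise, for $x \in K$ with $\psi(x) > t_0$ I use the trivial bound $\phi(x) \le M \le (M/t_0^\alpha)\,\psi(x)^\alpha$, and then take the final constant $c := \max\{C_0,\ M/t_0^\alpha\}$.

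The principal obstacle I anticipate is the first step, $g(0^+) = 0$: this is the single place where hypothesis (ii) -- a substitute for, and strictly weaker than, lower semicontinuity of $\psi$ -- must be combined with the compactness of $K$ and the continuity of $\phi$, and without it the conclusion fails outright. The semialgebraic growth dichotomy then does the rest essentially automatically. A marginal edge case is $\{x \in K : \psi(x) = 0\} = \emptyset$, but in that case (ii) together with compactness forces $\psi \ge \psi_{\min} > 0$ on $K$, and the desired inequality becomes trivial with any $\alpha > 0$ and $c = M/\psi_{\min}^\alpha$.
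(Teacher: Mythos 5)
Your proof is correct and follows essentially the same route as the paper: reduce to a one-variable semialgebraic envelope, show it tends to $0$ at $0^{+}$ using (i)--(iii) together with the compactness of $K$, then conclude via the growth dichotomy lemma near $0$ and the trivial bound $\phi \le M$ where $\psi$ is bounded away from $0$. The only difference is that you use the sublevel-set envelope $g(t)=\sup\{\phi(x) : x\in K,\ \psi(x)\le t\}$ instead of the paper's level-set envelope $\mu(t)=\sup_{x\in\psi^{-1}(t)}\phi(x)$, which makes the envelope automatically non-decreasing and lets you skip the paper's case distinction on whether $0$ is isolated in $\psi(K)$ as well as its appeal to the monotonicity lemma.
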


\begin{proof}
We may assume that the set $\psi^{-1}(0) = \{x\in K \ | \  \psi(x)=0\}$ is nonempty and different from $K.$ Otherwise, the lemma is trivial.

Let $M := \sup_{x \in K} \phi(x).$ By assumption, $M$ is finite and non-negative.
In light of Theorem~\ref{TarskiSeidenbergTheorem}, $\psi(K)$ is a semialgebraic set in $\mathbb{R},$ so it is a finite union of points and intervals. Observe that $0 \in \psi(K).$ There are two cases to consider.

\subsubsection*{Case 1:} $0$ is an isolated point of $\psi(K).$

Then there exists a real number $\epsilon > 0$ such that for all $x \in K,$ if $\psi(x) \le \epsilon$ then $\psi(x) = 0$ and so $\phi(x) = 0$ (by condition~(iii)).

On the other hand, for all $x \in K$ with $\psi(x) \ge \epsilon$ it holds that
$$\psi(x)\geq\epsilon=\frac{\epsilon}{(M + 1)} (M + 1) \geq \frac{\epsilon}{(M + 1)} \phi(x).$$
Therefore, for all $x\in K$ we have
$$\frac{(M + 1)}{\epsilon} \psi(x) \geq \phi(x),$$
which proves the lemma in this case.

\subsubsection*{Case 2:} $0$ is not an isolated point of $\psi(K).$

Then there exists a constant $T > 0$ such that $[0, T] \subset \psi(K)$ and so for all $t \in [0, T],$ the set $\psi^{-1}(t)$ is nonempty and bounded. This implies that the function
$$\mu\colon[0,T]\to\mathbb{R}, \quad t\mapsto \sup_{x\in\psi^{-1}(t)}\phi(x)$$
is well-defined and $\mu(0) = 0.$ Observe that, by Theorem~\ref{TarskiSeidenbergTheorem}, the function $\mu$ is semialgebraic.
We will show that $\mu$ is continuous at $t = 0.$ Indeed, if this is not the case, then there exists a real number $\delta>0$ and a sequence $\{t_k\}$ of real numbers with $t_k \to 0^+$ as $k \to \infty$ such that
$$\mu(t_k)\geq \delta \ \ \textrm{ for all } k.$$
By definition of the supremum, we can find $x^k\in\psi^{-1}(t_k)$ satisfying
$$\mu(t_k) \ge \phi(x^k)\geq\mu(t_k)-\frac{\delta}{2}\geq \delta -\frac{\delta}{2}
= \frac{\delta}{2} \ \ \textrm{ for all } k.$$
Note that $x^k$ belongs to the compact set $K.$
Passing a subsequence if necessary, we may assume that the sequence $\{x^k\}$ converges to some point $\bar x\in K.$ Then our assumptions imply that
\begin{eqnarray*}
  \phi(\bar x) &=& \lim_{k\to\infty} \phi(x^k)\geq\frac{\delta}{2}>0, \\
  \psi(\bar x) &=& 0 \quad \textrm{(because $\lim_{k \to \infty} \psi(x^k) = 0$}),
\end{eqnarray*}
which contradict the assumption that $\psi^{-1}(0)\subset \phi^{-1}(0).$ So, $\mu$ is continuous at $t=0.$

By Lemma~\ref{MonotonicityLemma}, there exists $\epsilon\in(0,T)$ such that the restriction of $\mu$ on $[0,\epsilon]$ is either constant or strictly monotone. There are two cases to consider.

\subsubsection*{Case 2.1:} $\mu$ is constant on $[0,\epsilon].$

Since $\mu(0)=0,$ $\mu(t)=0$ for all $t \in [0,\epsilon].$
It follows that for all $x\in K$ with $\psi(x)\leq \epsilon,$
$$\psi(x)\geq 0 = \phi(x).$$

On the other hand, by similar arguments as in Case~1, we can see that for all $x\in K$ with $\psi(x)\geq\epsilon,$
$$\psi(x) \geq \frac{\epsilon}{M + 1} \phi(x).$$
Therefore, for all $x\in K$ we have
$$\frac{M + 1}{\epsilon}\psi(x)\geq \phi(x).$$

\subsubsection*{Case 2.2:} $\mu$ is not constant on $[0,\epsilon].$

Thanks to Lemma~\ref{GrowthDichotomyLemma}, we can write
$$\mu(t)=at^{\alpha}+o(t^{\alpha}) \ \ \textrm{ as } t\to0^+$$
for some constants $a>0$ and $\alpha \in \mathbb{Q}.$ Observe that $\alpha > 0$ because $\mu(0) = 0$ and $\mu$ is continuous at $ t = 0.$ Letting $c_1:=2a$ and reducing $\epsilon$ (if necessary), we get
$$\mu(t)\leq c_1t^{\alpha} \ \ \textrm{ for } t\in[0,\epsilon].$$
Consequently, for all $x\in K$ with $\psi(x)\leq \epsilon,$ we have
$$\phi(x)\leq c_1[\psi(x)]^{\alpha}.$$

On the other hand, for all $x\in K$ with $\psi(x)\geq\epsilon,$ it is clear that
$$\psi(x)\geq\epsilon=\frac{\epsilon}{(M + 1)^{1/\alpha}} (M + 1)^{1/\alpha} \geq \frac{\epsilon}{(M + 1)^{1/\alpha}} [\phi(x)]^{1/\alpha},$$
or equivalently,
$$c_2[\psi(x)]^{\alpha} \geq \phi(x),$$
where $c_2 := \frac{M+1}{\epsilon^{\alpha}} > 0.$

Letting $c := \max\{c_1,c_2\} > 0,$ we get for all $x\in K,$
$$c[\psi(x)]^{\alpha} \geq \phi(x).$$
The lemma is proved.
\end{proof}

We are now in a position to state the main result of this section.

\begin{theorem}\label{Theorem31}
Let $F \colon \mathbb{R}^n \rightrightarrows \mathbb{R}^m$ be a semialgebraic set-valued map with closed graph.
Then the following are equivalent$:$
\begin{enumerate}[{\rm (i)}]
\item $F$ is an open map from $\mathrm{dom}F$ into $\mathrm{range} F$ and $\mathrm{range} F$ is locally closed;
\item $F$ is H\"older metrically regular;
\item $F^{-1}$ is pseudo-H\"older continuous;
\item $F^{-1}$ is lower pseudo-H\"older continuous.
\end{enumerate}
\end{theorem}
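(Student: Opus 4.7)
The plan is to prove the cycle (i) $\Rightarrow$ (ii) $\Rightarrow$ (iii) $\Rightarrow$ (iv) $\Rightarrow$ (i). The implication (i) $\Rightarrow$ (ii) is the principal one and is where semialgebraicity enters essentially, through Lemma~\ref{Lemma32}; the remaining three steps use only the closedness of $\mathrm{graph}\,F$ together with its immediate consequences, notably that $F(x)$ and $F^{-1}(y)$ are closed for every $x$ and $y$.

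For (i) $\Rightarrow$ (ii), fix $y^* \in \mathrm{range}\,F$ and a compact set $K \subset \mathbb{R}^n$. I would first replace $K$ by a closed ball $\mathbb{B}_R(0) \supset K$ so as to work on a compact \emph{semialgebraic} set. The local closedness of $\mathrm{range}\,F$ permits a choice of $\epsilon > 0$ with $\mathbb{B}_\epsilon(y^*) \cap \mathrm{range}\,F$ closed, hence compact. Set $K' := \mathbb{B}_R(0) \times (\mathbb{B}_\epsilon(y^*) \cap \mathrm{range}\,F)$ and define the non-negative semialgebraic functions $\varphi(x,y) := \mathrm{dist}(x,F^{-1}(y))$ and $\psi(x,y) := \mathrm{dist}(y,F(x))$ on $K'$. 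Lemma~\ref{Lemma31}, combined with the openness of $F$, yields continuity of $\varphi$ on $\mathbb{R}^n \times \mathrm{range}\,F$, hence on $K'$. Closedness of $\mathrm{graph}\,F$ ensures that whenever $(x_k,y_k) \to (\bar x,\bar y)$ in $K'$ with $\psi(x_k,y_k) \to 0$, one may choose $z_k \in F(x_k)$ with $\|y_k - z_k\| \to 0$; passing to the limit places $(\bar x, \bar y)$ in $\mathrm{graph}\,F$, so $\psi(\bar x, \bar y) = 0$. Moreover $\varphi^{-1}(0) = \psi^{-1}(0) = K' \cap \mathrm{graph}\,F$, because $F(x)$ and $F^{-1}(y)$ are closed. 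Lemma~\ref{Lemma32} now applies and produces constants $c, \alpha > 0$ with $\varphi(x,y) \leq c\,\psi(x,y)^\alpha$ on $K'$, which is precisely the H\"older metric regularity inequality at $y^*$ on the original $K$.

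For (ii) $\Rightarrow$ (iii), take any $y^1, y^2 \in \mathbb{B}_\epsilon(y^*) \cap \mathrm{range}\,F$ and $x \in F^{-1}(y^1) \cap K$. Since $y^1 \in F(x)$, one has $\mathrm{dist}(y^2, F(x)) \leq \|y^1 - y^2\|$, and (ii) gives $\mathrm{dist}(x, F^{-1}(y^2)) \leq c\|y^1 - y^2\|^\alpha$; as $F^{-1}(y^2)$ is closed, the infimum is attained, placing $x$ in $F^{-1}(y^2) + c\|y^1 - y^2\|^\alpha \mathbb{B}$. The implication (iii) $\Rightarrow$ (iv) is immediate upon setting $y^1 = y^*$.

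Finally, for (iv) $\Rightarrow$ (i), fix $y^* \in \mathrm{range}\,F$ and some $x^* \in F^{-1}(y^*)$, and apply (iv) with $K = \{x^*\}$ to obtain $\epsilon, c, \alpha > 0$ such that every $y \in \mathbb{B}_\epsilon(y^*) \cap \mathrm{range}\,F$ admits some $x(y) \in F^{-1}(y)$ with $\|x(y) - x^*\| \leq c\|y - y^*\|^\alpha$. Openness of $F$ follows: for any open $U \subset \mathrm{dom}\,F$ with $x^* \in U$ and $y^* \in F(x^*)$, one has $x(y) \in U$ once $y$ is sufficiently close to $y^*$, so $F(U)$ is a neighborhood of $y^*$ in $\mathrm{range}\,F$. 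Local closedness of $\mathrm{range}\,F$ follows too: if $y_k \in \mathbb{B}_{\epsilon/2}(y^*) \cap \mathrm{range}\,F$ converges to $y^\infty$, the points $x(y_k)$ are bounded, and a convergent subsequence $x(y_k) \to x^\infty$ combined with closedness of $\mathrm{graph}\,F$ yields $y^\infty \in F(x^\infty) \subset \mathrm{range}\,F$. The main obstacle throughout is the verification of the hypotheses of Lemma~\ref{Lemma32} in the step (i) $\Rightarrow$ (ii); this is precisely where both Lemma~\ref{Lemma31} (for continuity of $\varphi$) and the local closedness of $\mathrm{range}\,F$ (for compactness of $K'$) are used essentially.
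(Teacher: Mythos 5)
Your proposal follows the paper's own route almost verbatim: the cycle (i) $\Rightarrow$ (ii) $\Rightarrow$ (iii) $\Rightarrow$ (iv) $\Rightarrow$ (i), with (i) $\Rightarrow$ (ii) obtained from Lemma~\ref{Lemma31} combined with the generalized \L ojasiewicz inequality of Lemma~\ref{Lemma32}, and the remaining implications argued as in the paper (the paper proves (iv) $\Rightarrow$ openness by contradiction, you argue it directly, but the content is identical). Your replacement of the arbitrary compact set $K$ by a ball $\mathbb{B}_R(0)$ is a good, and in fact necessary, touch, since Lemma~\ref{Lemma32} is stated for compact \emph{semialgebraic} sets.

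One point in (i) $\Rightarrow$ (ii) needs repair: you define $\psi(x,y):=\mathrm{dist}(y,F(x))$ on all of $K'=\mathbb{B}_R(0)\times V$, but for $x\notin\mathrm{dom}\,F$ one has $F(x)=\emptyset$, so with the paper's convention $\psi(x,y)=+\infty$; thus $\psi$ is not a real-valued (hence not a semialgebraic) function on $K'$, and Lemma~\ref{Lemma32} does not literally apply, since $\mathbb{B}_R(0)$ need not be contained in $\mathrm{dom}\,F$ (closedness of the graph does not make $\mathrm{dom}\,F$ closed). The paper handles this by setting $\psi(x,y):=M$ for a fixed constant $M>0$ whenever $x\notin\mathrm{dom}\,F$; one then verifies, exactly as you do, that hypotheses (ii) and (iii) of Lemma~\ref{Lemma32} still hold (in the sequential condition one may assume $\psi(x^k,y^k)<M$, so that $x^k\in\mathrm{dom}\,F$ and a nearly distance-attaining $z^k\in F(x^k)$ exists), and the resulting estimate $\mathrm{dist}(x,F^{-1}(y))\le c[\mathrm{dist}(y,F(x))]^{\alpha}$ is trivially true at points $x\notin\mathrm{dom}\,F$, so H\"older metric regularity on the original $K$ follows. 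With this one-line modification your argument coincides with the paper's proof; everything else in your write-up (continuity of $\varphi$ via Lemma~\ref{Lemma31}, compactness of $V$ from local closedness of $\mathrm{range}\,F$, the closedness arguments in (ii) $\Rightarrow$ (iii) and (iv) $\Rightarrow$ (i)) is correct.
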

\begin{proof}
(i) $\Rightarrow$ (ii). Let $y^*\in\mathrm{range} F.$ By assumption, there exists $\epsilon>0$ such that       $\mathbb{B}_\epsilon(y^*)\cap\mathrm{range} F$ is a nonempty compact set in $\mathbb{R}^m.$
Let $K\subset \mathbb{R}^n$ be a nonempty compact set, and, for simplicity of notation, we write $V:=\mathbb{B}_\epsilon(y^*)\cap\mathrm{range} F.$ Then $K \times V$ is a nonempty compact set.

The function
$$\phi \colon K \times V \to \mathbb{R}, \quad (x, y) \mapsto \mathrm{dist}(x, F^{-1}(y)),$$
is well-defined and non-negative. Moreover, by Theorem~\ref{TarskiSeidenbergTheorem} and Lemma~\ref{Lemma31}, $\phi$ is a semialgebraic and continuous function.

Next, fix a constant $M > 0$ and define the nonnegative function $\psi\colon K \times V \to \mathbb{R}$ by 
$$\psi(x, y) := 
\begin{cases}
\mathrm{dist}(y, F(x)) & \textrm{ if } x \in \mathrm{dom} F, \\
M & \textrm{ otherwise.}
\end{cases}$$
In view of Theorem~\ref{TarskiSeidenbergTheorem}, $\psi$ is semialgebraic.
Moreover, since the graph of $F$ is closed, if $\psi(x, y) = 0$ then $y \in F(x)$ and so $\phi(x, y) = 0.$ Hence $\psi^{-1}(0)\subset\phi^{-1}(0).$

Now, let $\{(x^k, y^k)\} \subset K \times V$ be a sequence converging to $(\bar x, \bar y) \in K \times V$ such that $\lim_{k \to \infty} \psi(x^k, y^k) = 0.$ We will claim that $\psi(\bar{x}, \bar{y}) = 0.$ Indeed, without loss of generality, we may assume that $\psi(x^k, y^k) < M$ for all $k.$ Since the graph of $F$ is closed, there exists $z^k \in F(x^k)$ such that
$$\psi(x^k, y^k) = \mathrm{dist}(y^k, F(x^k)) = \|y^k - z^k\|.$$
On the other hand, the sequence $\{y^k\}$ converges to $\bar{y},$ and so it is bounded. It follows easily that the sequence $\{z^k\}$ is also bounded.
Passing a subsequence if necessary, we may assume that the sequence $\{z^k\}$ converges to some point $\bar z.$
Clearly $\bar z \in F(\bar x)$ because of the closedness of the graph of $F.$ Hence,
\begin{eqnarray*}
0 & = & \lim_{k\to\infty} \psi(x^k, y^k) \ = \ \lim_{k\to\infty}\mathrm{dist}(y^k, F(x^k)) \ =\ \lim_{k\to\infty} \|y^k - z^k\| \ = \ \|\bar y - \bar z\|,
\end{eqnarray*}
and so $\bar y = \bar z.$ Therefore, $\psi(\bar x, \bar y) = \psi(\bar x, \bar z) = \mathrm{dist}(\bar x, F^{-1}(\bar z)) = 0,$ as required.

Applying Lemma~\ref{Lemma32} to the functions $\phi$ and $\psi,$ we get constants $c > 0$ and $\alpha > 0$ such that for all $(x, y) \in K \times V,$
\begin{eqnarray*}
\phi(x, y) \le c [\psi(x, y)]^\alpha,
\end{eqnarray*}
or equivalently,
\begin{eqnarray*}
\mathrm{dist}(x, F^{-1}(y)) &\leq& c [\mathrm{dist}(y, F(x))]^{\alpha},
\end{eqnarray*}
which proves (ii).

(ii) $\Rightarrow$ (iii).
Let $y^*\in\mathrm{range} F$ and let $K\subset \mathbb{R}^n$ be a nonempty compact set. Then there exist constants $\epsilon > 0,$ $c > 0$ and $\alpha > 0$ such that
\begin{eqnarray*}
\mathrm{dist}(x, F^{-1}(y)) &\leq&  c[\mathrm{dist}(y, F(x))]^\alpha
\end{eqnarray*}
for all $x \in K$ and all $y \in V:= \mathbb{B}_\epsilon(y^*) \cap \mathrm{range}F.$ Fix $y^1, y^2 \in V,$ and take arbitrarily $x \in F^{-1}(y^1) \cap K.$ We have
\begin{eqnarray*}
\mathrm{dist}(x, F^{-1}(y^2)) &\leq& c [\mathrm{dist}(y^2, F(x))]^{\alpha} \ \le \ c \|y^1 - y^2\|^{\alpha}.
\end{eqnarray*}
Therefore
\begin{eqnarray*}
F^{-1}(y^1)  \cap K &\subset& F^{-1}(y^2) + c \|y^1 - y^2\|^\alpha \mathbb{B},
\end{eqnarray*}
which proves the desired claim.

(iii) $\Rightarrow$ (iv). This is obvious.

(iv) $\Rightarrow$ (i).
We proceed by contradiction. Assume that there exists an open set $U$ in $\textrm{dom} F$ such that $F(U)$ is not open in $\textrm{range}F,$ i.e., there exists a point $y^*\in F(U)$ and a sequence $\{y^k\}$ in $\textrm{range} F\backslash F(U)$ such that $y^k\to y^*$ as $k\to\infty.$
Let $x^*\in U$ with $y^*\in F(x^*).$
Condition~(iv) implies the existence of constants $c>0$ and $\alpha>0$ such that for sufficiently large $k,$
$$\textrm{dist}(x^*,F^{-1}(y^k))\leq c\|y^* - y^k\|^\alpha.$$
Since $F^{-1}(y^k)$ is closed, there exists $x^k\in F^{-1}(y^k)$ such that
\begin{eqnarray*}
\|x^*-x^k\|=\textrm{dist}(x^*,F^{-1}(y^k)).
\end{eqnarray*}
Therefore for sufficiently large $k,$
\begin{eqnarray*}
\|x^*-x^k\| \le c\|y^* - y^k\|^\alpha,
\end{eqnarray*}
and so $x^k\to x^*$ as $k\to\infty.$ Hence, $x^k\in U$ for sufficiently large $k.$ For all such $k,$ $y^k \in F(x^k) \subset F(U),$ which is a contradiction.
Therefore, $F$ is an open map from $\mathrm{dom}F$ into $\mathrm{range}F.$

Finally, take any $y^*\in \mathrm{range}F.$
Then $y^*\in F(x^*)$ for some $x^*\in\mathrm{dom}F.$
The assumption gives us constants $\epsilon>0,$ $c>0$ and $\alpha>0$ such that for all $y\in\mathbb{B}_\epsilon(y^*)\cap\mathrm{range}F,$
$$\mathrm{dist}(x^*,F^{-1}(y))\leq c\|y^* - y\|^{\alpha}.$$
We will show that the set $V:=\mathbb{B}_\epsilon(y^*)\cap\mathrm{range}F$ is closed in $\mathbb{R}^m.$
To see this, let $\{y^k\} \subset V$ be a sequence such that $y^k\to y$ as $k\to \infty.$
Clearly, $y\in\mathbb{B}_\epsilon(y^*).$
Moreover, for each $k,$ we can find $x^k\in F^{-1}(y^k)$ such that
$$\|x^*-x^k\| = \mathrm{dist}(x^*, F^{-1}(y^k)).$$
Consequently, for sufficiently large $k,$
$$\|x^*-x^k\| \leq c\|y^* - y\|^{\alpha} \leq c \epsilon^{\alpha}.$$
Hence the sequence $\{x^k\}$ is bounded, and so, it has a cluster point, say $x.$
Since the graph of $F$ is closed, $x\in F^{-1}(y)$ and so $y\in\mathrm{range}F.$
Therefore $\mathbb{B}_\epsilon(y^*)\cap \mathrm{range} F$ is a closed set.
\end{proof}

\begin{remark}\label{Remark1}
Let $F \colon \mathbb{R}^n \rightrightarrows \mathbb{R}^m$ be a polyhedral map.
It follows from the Tarski--Seidenberg theorem that the range of $F$ is a semialgebraic set. Hence, by a result of Kurdyka \cite{Kurdyka1992}, the range of $F$ is a finite union of sets $S_i,$ where each $S_i$ has the Whitney property with constant $M$: any two points $x, y \in S_i$ can be joined in $S_i$ by a piecewise smooth path of length $\le M\|x - y\|.$ From this, we can see that the main theorem in \cite{Gowda1996} still holds if we replace the assumption that the range of $F$ is convex by the assumption that the range of $F$ is connected. As we shall not use this ``improved'' statement, we leave the proof to the reader.
\end{remark}

With $G := F^{-1}$ Theorem~\ref{Theorem31} leads to the following corollary.
\begin{corollary}
Let $G \colon \mathbb{R}^m \rightrightarrows \mathbb{R}^n$ be a semialgebraic set-valued map with closed graph.
Then the following are equivalent$:$
\begin{enumerate}[{\rm (i)}]
\item $G$ is continuous $($i.e., the inverse image of an open set in $\mathrm{range} G$ under $G$ is open in $\mathrm{dom}G)$ and $\mathrm{dom}G$ is locally closed;
\item $G$ is pseudo-H\"older continuous.
\end{enumerate}
\end{corollary}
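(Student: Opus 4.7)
The plan is to reduce the statement to Theorem~\ref{Theorem31} by setting $F := G^{-1}$. First I would observe that the graph of $G^{-1}$ is obtained from $\mathrm{graph}\,G$ by swapping the two coordinate blocks, so $F$ inherits semialgebraicity (by the Tarski--Seidenberg theorem, or simply by noting swapping is a linear isomorphism) and closedness of its graph, which means Theorem~\ref{Theorem31} is applicable to $F$. Under this substitution the translation dictionary reads $\mathrm{dom}\,F = \mathrm{range}\,G$, $\mathrm{range}\,F = \mathrm{dom}\,G$, and $F^{-1} = G$.

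Next I would match condition~(i) of the corollary with condition~(i) of Theorem~\ref{Theorem31} applied to $F$. The only item that requires a moment's unpacking is the openness clause: saying that $F$ is open from $\mathrm{dom}\,F$ into $\mathrm{range}\,F$ means that $F(U)$ is open in $\mathrm{range}\,F$ for every open $U \subset \mathrm{dom}\,F$, and for a set-valued map the image of $U$ coincides, by definition, with the preimage $G^{-1}(U) = \{x \in \mathrm{dom}\,G : G(x) \cap U \neq \emptyset\}$. Thus openness of $F$ in the sense of Theorem~\ref{Theorem31} is literally the continuity of $G$ in the sense stated in the corollary, while the clause ``$\mathrm{range}\,F$ is locally closed'' becomes verbatim ``$\mathrm{dom}\,G$ is locally closed.''

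Finally, condition~(iii) of Theorem~\ref{Theorem31} applied to $F$ asserts that $F^{-1}$ is pseudo-H\"older continuous, and since $F^{-1} = G$ this is precisely condition~(ii) of the corollary. The equivalence (i)~$\Leftrightarrow$~(iii) of Theorem~\ref{Theorem31} then yields the desired equivalence. The only potential pitfall is the bookkeeping around the convention for the preimage of a set under a set-valued map, so that $F(U)$ and $G^{-1}(U)$ genuinely coincide as subsets of $\mathrm{dom}\,G$; once that is confirmed no further work is required.
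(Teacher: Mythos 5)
Your proposal is correct and follows exactly the paper's route: the paper obtains this corollary from Theorem~\ref{Theorem31} by the very substitution $G = F^{-1}$ (equivalently $F := G^{-1}$), with the same translation $\mathrm{dom}\,F = \mathrm{range}\,G$, $\mathrm{range}\,F = \mathrm{dom}\,G$, $F^{-1} = G$, and the identification of $F(U)$ with the weak preimage $\{x \in \mathrm{dom}\,G : G(x) \cap U \neq \emptyset\}$. Your bookkeeping of the openness clause and of the closed, semialgebraic graph under coordinate swapping is exactly the (implicit) content of the paper's one-line derivation, so nothing further is needed.
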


The following result shows that the H\"older metric subregularity holds for the class of semialgebraic set-valued maps with closed graph.
\begin{proposition}
Let $F \colon \mathbb{R}^n \rightrightarrows \mathbb{R}^m$ be a semialgebraic set-valued map with closed graph. Then $F$ is H\"older metrically subregular.
\end{proposition}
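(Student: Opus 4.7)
The plan is to mimic the structure of the proof of the implication (i)$\Rightarrow$(ii) of Theorem~\ref{Theorem31}, but using the \emph{fixed} fibre $F^{-1}(y^*)$ instead of varying $y$ in a neighbourhood of $y^*$. Since the target point $y^*$ is fixed, no openness hypothesis is needed: the continuity required in condition~(i) of Lemma~\ref{Lemma32} comes for free because the distance to the fixed closed set $F^{-1}(y^*)$ is $1$-Lipschitz.

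First, fix $y^*\in\mathrm{range}F$ and a nonempty compact set $K\subset\mathbb{R}^n$. Since the desired inequality only needs to be verified on $K$, we may enlarge $K$ to a closed ball, and so assume without loss of generality that $K$ is semialgebraic. Define
\begin{equation*}
\phi\colon K\to\mathbb{R},\qquad \phi(x):=\mathrm{dist}\bigl(x,F^{-1}(y^*)\bigr),
\end{equation*}
and, fixing any constant $M>0$, define
\begin{equation*}
\psi\colon K\to\mathbb{R},\qquad \psi(x):=\begin{cases}\mathrm{dist}(y^*,F(x))&\text{if }x\in\mathrm{dom}F,\\ M&\text{otherwise.}\end{cases}
\end{equation*}
Both functions are non-negative and, by the Tarski--Seidenberg theorem, semialgebraic. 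Moreover, $\phi$ is continuous (it is the distance function to the nonempty closed set $F^{-1}(y^*)$, which is closed since $\mathrm{graph}F$ is closed and $F^{-1}(y^*)$ is nonempty because $y^*\in\mathrm{range}F$).

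Next, I verify the hypotheses of Lemma~\ref{Lemma32}. For condition~(ii), suppose $\{x^k\}\subset K$ converges to some $\bar x\in K$ and $\psi(x^k)\to 0$. For all sufficiently large $k$ we have $\psi(x^k)<M$, so $x^k\in\mathrm{dom}F$ and $\psi(x^k)=\mathrm{dist}(y^*,F(x^k))$. Because $F(x^k)$ is nonempty and closed, we can choose $z^k\in F(x^k)$ attaining this distance, i.e.\ $\|y^*-z^k\|=\psi(x^k)\to 0$, hence $z^k\to y^*$. The pairs $(x^k,z^k)$ belong to the closed set $\mathrm{graph}F$ and converge to $(\bar x,y^*)$, so $(\bar x,y^*)\in\mathrm{graph}F$; in particular $\bar x\in\mathrm{dom}F$ and $\psi(\bar x)=\mathrm{dist}(y^*,F(\bar x))=0$. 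For condition~(iii), if $\psi(x)=0$ then $x\in\mathrm{dom}F$ and $\mathrm{dist}(y^*,F(x))=0$; again the closedness of $F(x)$ yields $y^*\in F(x)$, i.e.\ $x\in F^{-1}(y^*)$, hence $\phi(x)=0$.

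Lemma~\ref{Lemma32} therefore furnishes constants $c>0$ and $\alpha>0$ with
\begin{equation*}
\phi(x)\;\le\;c\,[\psi(x)]^{\alpha}\qquad\text{for every }x\in K.
\end{equation*}
For $x\in K\cap\mathrm{dom}F$ this is exactly $\mathrm{dist}(x,F^{-1}(y^*))\le c\,[\mathrm{dist}(y^*,F(x))]^{\alpha}$, while for $x\in K\setminus\mathrm{dom}F$ the set $F(x)$ is empty, so $\mathrm{dist}(y^*,F(x))=+\infty$ under the convention of the paper and the inequality holds trivially. This establishes the H\"older metric subregularity of $F$ at $y^*$ on $K$, and since $y^*$ and $K$ were arbitrary, the proposition follows. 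The only mildly delicate point is the choice of $\psi$ off $\mathrm{dom}F$: a naive definition could fail to be semialgebraic or could break condition~(ii) of Lemma~\ref{Lemma32}, but the constant extension above avoids both issues.
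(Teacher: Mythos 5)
Your proof is correct and follows essentially the same route as the paper's: the same $\phi(x)=\mathrm{dist}(x,F^{-1}(y^*))$, the same $\psi$ extended by a constant $M$ off $\mathrm{dom}F$, and an application of Lemma~\ref{Lemma32} after checking its hypotheses exactly as in the proof of Theorem~\ref{Theorem31}. The only difference is that you spell out the verification of conditions (ii)--(iii) and the harmless enlargement of $K$ to a semialgebraic compact set, details the paper leaves implicit.
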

\begin{proof}
Indeed, fix a point $y^* \in \mathrm{range} F$ and a semialgebraic compact set $K \subset \mathbb{R}^n.$ The function
$$\phi \colon K \to \mathbb{R}, \quad x \mapsto \mathrm{dist}(x, F^{-1}(y^*)),$$
is non-negative and continuous. Moreover, by Theorem~\ref{TarskiSeidenbergTheorem}, $\phi$ is semialgebraic.

Next, fix a constant $M > 0$ and define the nonnegative function $\psi\colon K \to \mathbb{R}$ by 
$$\psi(x) := 
\begin{cases}
\mathrm{dist}(y^*, F(x)) & \textrm{ if } x \in \mathrm{dom} F, \\
M & \textrm{ otherwise.}
\end{cases}$$
In view of Theorem~\ref{TarskiSeidenbergTheorem}, $\psi$ is semialgebraic.
Moreover, since the graph of $F$ is closed, if $\psi(x) = 0$ then $y^* \in F(x)$ and so $\phi(x) = 0.$ Hence $\psi^{-1}(0)\subset\phi^{-1}(0).$

As in the proof of Theorem~\ref{Theorem31} we can see that for any sequence $\{x^k\} \subset K$ converging to $\bar x \in K$ such that $\lim_{k \to \infty} \psi(x^k) = 0,$ it holds that $\psi(\bar{x}) = 0.$ 

Finally, applying Lemma~\ref{Lemma32} to the semialgebraic functions $\phi$ and $\psi,$ we get the desired conclusion.
\end{proof}

\begin{corollary}\label{Corollary32}
Let $f\colon\mathbb{R}^n\to\mathbb{R}^m$ be a continuous semialgebraic map.
Then the following are equivalent$:$
\begin{enumerate}[{\rm (i)}]
\item $f$ is an open map from $\mathbb{R}^n$ into $\mathrm{range} f$ and $\mathrm{range} f$ is locally closed;
\item $f$ is H\"older metrically regular;
\item $f^{-1}$ is pseudo-H\"older continuous;
\item $f^{-1}$ is lower pseudo-H\"older continuous.
\end{enumerate}
\end{corollary}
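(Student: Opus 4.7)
The plan is to deduce Corollary~\ref{Corollary32} as an immediate specialization of Theorem~\ref{Theorem31}. To this end, I would associate to the continuous single-valued semialgebraic map $f$ the set-valued map
$F \colon \mathbb{R}^n \rightrightarrows \mathbb{R}^m$ defined by $F(x) := \{f(x)\}.$
Then $\mathrm{dom}F = \mathbb{R}^n,$ $\mathrm{range}F = \mathrm{range}f,$ and the set-valued inverse of $F$ coincides (as a set-valued map) with $f^{-1}.$ Moreover, $\mathrm{graph}F = \mathrm{graph}f$ is semialgebraic (because $f$ is) and closed in $\mathbb{R}^n \times \mathbb{R}^m$ (because $f$ is continuous on all of $\mathbb{R}^n$). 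Thus $F$ satisfies the hypotheses of Theorem~\ref{Theorem31}.

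Next, I would verify that the four conditions for $f$ listed in the corollary correspond, one by one, to the four equivalent conditions for $F$ in Theorem~\ref{Theorem31}. For (i), openness of $f$ from $\mathbb{R}^n$ into $\mathrm{range}f$ is literally openness of $F$ from $\mathrm{dom}F$ into $\mathrm{range}F,$ since $f(U) = F(U)$ for every $U \subset \mathbb{R}^n,$ and the local closedness of $\mathrm{range}f$ is the local closedness of $\mathrm{range}F.$ For (ii), note that for every $x \in \mathbb{R}^n$ and $y \in \mathbb{R}^m$ we have $\mathrm{dist}(y, F(x)) = \|y - f(x)\|,$ while $F^{-1}(y) = f^{-1}(y),$ so the Hölder metric regularity inequalities for $F$ and for $f$ are identical. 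For (iii) and (iv), the relations $F^{-1}(y) = f^{-1}(y)$ immediately make the pseudo-Hölder and lower pseudo-Hölder inequalities for $F^{-1}$ the same as those for $f^{-1}.$

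With these identifications in place, the equivalence (i)$\Leftrightarrow$(ii)$\Leftrightarrow$(iii)$\Leftrightarrow$(iv) of Corollary~\ref{Corollary32} follows at once from the corresponding equivalence in Theorem~\ref{Theorem31}. There is no real obstacle here; the only point requiring a brief check is the closedness of $\mathrm{graph}f,$ which uses continuity of $f$ together with $\mathrm{dom}f = \mathbb{R}^n.$ Everything else is a direct dictionary translation between the single-valued and set-valued formulations.
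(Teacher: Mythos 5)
Your proposal is correct and follows exactly the paper's route: the paper likewise observes that continuity of $f$ makes its graph closed and then invokes Theorem~\ref{Theorem31} directly, treating $f$ as a set-valued map. Your additional dictionary checks (identifying $\mathrm{dist}(y,F(x))=\|y-f(x)\|$, $F^{-1}=f^{-1}$, etc.) are a harmless elaboration of the same argument.
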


\begin{proof}
Since the map $f$ is continuous, the graph of $f$ is closed.
Then the desired conclusion follow immediately from Theorem~\ref{Theorem31}.
\end{proof}

We end this section with the following result, which provides a characterization of the openness for continuous functions. Also note the work \cite[Theorem~1]{Puhl1998} 
(see also \cite[Proposition~5.11]{Durea2017}), for a convex continuous function, the equivalence between the openness at a linear rate and the non-minimality was shown.

\begin{proposition}
Let $f\colon\mathbb{R}^n \to \mathbb{R}$ be a continuous function. Then the following are equivalent$:$
\begin{enumerate}[{\rm (i)}]
\item $f$ is open;
\item $f$ has no extremum points.
\end{enumerate}
\end{proposition}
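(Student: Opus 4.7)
\bigskip

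\textbf{Proof proposal.} The plan is to prove the two implications separately, entirely by elementary arguments based on continuity and the intermediate value theorem; semialgebraicity is \emph{not} needed here, which is exactly why the result improves on \cite{Puhl1998}.

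\medskip

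For the implication (i)~$\Rightarrow$~(ii) I would argue by contrapositive. Suppose $x^* \in \mathbb{R}^n$ is an extremum point of $f$; say it is a local minimum (the local maximum case is symmetric). Then there exists an open neighbourhood $U$ of $x^*$ such that $f(x) \geq f(x^*)$ for every $x \in U$. Consequently $f(U) \subset [f(x^*), +\infty)$ and $f(x^*) \in f(U)$, so $f(x^*)$ is not an interior point of $f(U)$ in $\mathbb{R}$. Hence $f(U)$ is not open, which contradicts the openness of $f$.

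\medskip

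For the converse (ii)~$\Rightarrow$~(i), assume that $f$ has no extremum points, let $U \subset \mathbb{R}^n$ be open, and fix an arbitrary $y \in f(U)$, say $y = f(x)$ with $x \in U$. The task is to produce $\delta > 0$ with $(y - \delta, y + \delta) \subset f(U)$. Choose an open ball $B \subset U$ centred at $x$. Since $x$ is neither a local maximum nor a local minimum, there exist points $x_1, x_2 \in B$ with
\begin{equation*}
f(x_1) \;<\; y \;<\; f(x_2).
\end{equation*}
Because $B$ is path-connected, there is a continuous path $\gamma \colon [0,1] \to B$ with $\gamma(0) = x_1$ and $\gamma(1) = x_2$. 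The composition $f \circ \gamma \colon [0,1] \to \mathbb{R}$ is continuous, so by the intermediate value theorem $f \circ \gamma$ attains every value in the interval $[f(x_1), f(x_2)]$. In particular, setting $\delta := \min\{y - f(x_1),\, f(x_2) - y\} > 0$, we get $(y - \delta,\, y + \delta) \subset f(\gamma([0,1])) \subset f(B) \subset f(U)$. Hence $f(U)$ is open in $\mathbb{R}$, which completes the proof.

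\medskip

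I do not anticipate a genuine obstacle: once one uses that $x$ is neither a local max nor a local min to produce $x_1$ and $x_2$ on opposite sides of $y$ within an arbitrarily small ball around $x$, the intermediate value theorem applied to a path in that ball immediately delivers an open interval around $y$ inside $f(U)$. The only minor point worth highlighting in the writeup is that the target space is one-dimensional, which is crucial: it is what makes non-extremality (a two-sided condition) sufficient, and it is what lets the IVT conclude openness directly.
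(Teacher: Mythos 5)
Your proof is correct and follows essentially the same route as the paper: both implications come down to the connectedness of a small ball around the preimage point together with the intermediate value property of continuous real-valued functions, with non-extremality guaranteeing that the value $y$ is not an endpoint of the resulting interval in the image. The only cosmetic difference is that you argue directly via a path and the IVT, while the paper argues by contradiction, using that the continuous image of the compact connected ball $\mathbb{B}_r(x^*)$ is a closed interval $[a,b]$ and that non-extremality forces $a < y^* < b$; the mathematical content is the same.
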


\begin{proof}
(i) $\Rightarrow$ (ii). This is obvious.

(ii) $\Rightarrow$ (i). Let $U$ be an open set in $\mathbb{R}^n$ and $y^* \in f(U),$ let $x^* \in U$ with $y^*= f(x^*).$ We proceed by contradiction.
Assume that no neighbourhood of $y^*$ (in $\mathbb{R}$) is contained in $f(U).$
Then there is a sequence $\{y^k\}$ in $\mathbb{R} \setminus f(U)$ converging to $y^*.$

Since $U$ is an open set containing $x^*,$ there exists a real number $r > 0$ such that $\mathbb{B}_r(x^*) \subset U.$ Since the closed ball $\mathbb{B}_r(x^*)$ is compact and connected set in $\mathbb{R}^n$ and since the function $f$ is continuous, the image $f(\mathbb{B}_r(x^*))$ is a compact and connected set in $\mathbb{R},$ and so it is a closed interval, say, $[a, b] \subset \mathbb{R}.$ Then 
$$a \le y^* = f(x^*) \le b.$$ 
If $a = y^*$ (resp., $b = y^*$) then $f(x) \ge y^*$ (resp., $f(x) \le y^*)$ for all $x \in \mathbb{B}_r(x^*),$ and so $x^*$ is a local minimizer (resp., maximizer) of $f,$ which contradicts our assumption. Therefore $a < y^* < b.$ It follows that $a < y^k < b$ for all large $k.$ For all such $k,$ $y^k \in f(\mathbb{B}_r(x^*)) \subset f(U),$ which is a contradiction.
\end{proof}

\section{Application to the variational inequality} \label{Section4}

In this section, we apply our previous analysis to semialgebraic variational inequalities.
So consider the variational inequality problem formulated in the introduction section:
\begin{equation} \label{VI}
{\rm Find} \  x \in C \ \textrm{ such that } \ \langle p + f(x), x' - x \rangle \ \ge \  0 \quad \textrm{ for all } \quad x' \in C, \tag{VI}
\end{equation}
where $p \in \mathbb{R}^n$ is a parameter vector, $f \colon \mathbb{R}^n \to \mathbb{R}^n$ is a continuous semialgebraic map, and $C \subset \mathbb{R}^n$ is a closed convex semialgebraic set. 

Let $\mathscr{S} \colon \mathbb{R}^n \rightrightarrows  \mathbb{R}^n, p \mapsto \mathscr{S}(p),$ be the solution map associated to the variational inequality~\eqref{VI} and let $\mathscr{F}\colon\mathbb{R}^n\to\mathbb{R}^n,$ $u\mapsto \mathscr{F}(u),$ be the {\em normal map} defined by
\begin{eqnarray*}
\mathscr{F}(u) &:=& f(\Pi_C(u)) + u - \Pi_C(u),
\end{eqnarray*}
where $\Pi_C$ is the Euclidean projection onto the set $C.$

\begin{lemma}\label{Lemma41}
The following relations hold:
\begin{eqnarray*}
\mathscr{S}(p) = \Pi_C\big (\mathscr{F}^{-1}(-p) \big) \quad \textrm{ and } \quad
\mathscr{F}^{-1}(-p) = \{x - f(x) - p \ | \ x \in \mathscr{S}(p)\}.
\end{eqnarray*}
\end{lemma}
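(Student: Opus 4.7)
The plan is to use the well-known variational characterization of the metric projection onto a closed convex set, namely that for $z \in \mathbb{R}^n,$ the point $x = \Pi_C(z)$ if and only if $x \in C$ and $\langle z - x, x' - x \rangle \le 0$ for every $x' \in C.$ With this in hand, the definition of $\mathscr{S}(p)$ can be rephrased as a fixed-point equation for the projection, and the whole lemma reduces to bookkeeping.

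First I would rewrite membership in $\mathscr{S}(p).$ The inequality $\langle p + f(x), x' - x \rangle \ge 0$ for all $x' \in C$ is the same as $\langle (x - f(x) - p) - x, x' - x \rangle \le 0$ for all $x' \in C,$ which by the projection characterization is equivalent to $x \in C$ and
\begin{equation*}
x \ = \ \Pi_C\bigl(x - f(x) - p\bigr).
\end{equation*}
So $x \in \mathscr{S}(p)$ if and only if $x \in C$ and the above fixed-point identity holds.

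Next I would pass to the normal map. Given $x \in \mathscr{S}(p),$ set $u := x - f(x) - p.$ The identity above gives $\Pi_C(u) = x,$ hence $u - \Pi_C(u) = -f(x) - p,$ and therefore
\begin{equation*}
\mathscr{F}(u) \ = \ f(\Pi_C(u)) + u - \Pi_C(u) \ = \ f(x) + (-f(x) - p) \ = \ -p,
\end{equation*}
so $u \in \mathscr{F}^{-1}(-p)$ and $x = \Pi_C(u).$ This proves the inclusions $\mathscr{S}(p) \subset \Pi_C(\mathscr{F}^{-1}(-p))$ and $\{x - f(x) - p : x \in \mathscr{S}(p)\} \subset \mathscr{F}^{-1}(-p).$

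Conversely, take $u \in \mathscr{F}^{-1}(-p)$ and set $x := \Pi_C(u) \in C.$ From $\mathscr{F}(u) = -p$ we get $u - \Pi_C(u) = -p - f(\Pi_C(u)),$ which rearranges to $u = x - f(x) - p.$ Substituting back, $\Pi_C(x - f(x) - p) = x,$ which by the equivalence established in the first step means $x \in \mathscr{S}(p).$ This gives the reverse inclusions and completes both identities. The argument is essentially a direct translation between the two formulations and involves no real obstacle beyond the standard projection characterization; the only point requiring a little care is making sure the two directions of each identity are matched correctly, which is what the two-sided argument above ensures.
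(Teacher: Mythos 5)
Your proof is correct. The paper itself offers no argument for this lemma, simply citing Proposition~1.5.9 of Facchinei and Pang \cite{Facchinei2003}; what you have written out is essentially the standard proof underlying that citation, namely the equivalence $x \in \mathscr{S}(p) \Leftrightarrow x = \Pi_C\bigl(x - f(x) - p\bigr)$ obtained from the variational characterization of the projection, followed by the substitution $u = x - f(x) - p$ in one direction and $x = \Pi_C(u)$ in the other. Both inclusions of each identity are accounted for (your converse step simultaneously yields $\Pi_C(\mathscr{F}^{-1}(-p)) \subset \mathscr{S}(p)$ and $\mathscr{F}^{-1}(-p) \subset \{x - f(x) - p \ | \ x \in \mathscr{S}(p)\}$), so the self-contained argument is a perfectly acceptable replacement for the citation; its only cost is repeating a textbook computation, while its benefit is that the paper would no longer depend on an external reference for this step.
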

\begin{proof}
See, for example, \cite[Proposition~1.5.9]{Facchinei2003}.
\end{proof}

The main result of this section is as follows.

\begin{theorem}\label{Theorem41}
With the above notation, the following statements are equivalent:
\begin{enumerate}[{\rm (i)}]
\item $\mathscr{S}$ is lower semicontinuous and $\mathrm{dom}\mathscr{S}$ is locally closed;
\item $\mathscr{S}$ is pseudo-H\"older continuous;
\item $\mathscr{S}$ is lower pseudo-H\"older continuous;
\item $\mathscr{S}^{-1}$ is an open map from $\mathrm{range}\mathscr{S}$ into $\mathrm{dom}\mathscr{S}$  and $\mathrm{dom}\mathscr{S}$ is locally closed;
\item $\mathscr{S}^{-1}$ is H\"older metrically regular;
\item $\mathscr{F}$ is an open map from $\mathbb{R}^n$ into $\mathrm{range} \mathscr{F}$ and $\mathrm{range}  \mathscr{F}$ is locally closed;
\item $\mathscr{F}$ is H\"older metrically regular;
\item $\mathscr{F}^{-1}$ is pseudo-H\"older continuous;
\item $\mathscr{F}^{-1}$ is lower pseudo-H\"older continuous.
\end{enumerate}
\end{theorem}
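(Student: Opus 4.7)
My approach is to apply Theorem~\ref{Theorem31} twice---first to the continuous semialgebraic single-valued map $\mathscr{F}$, then to the semialgebraic closed-graph map $\mathscr{S}^{-1}$---and to bridge the two resulting blocks using Lemma~\ref{Lemma41}.

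Since $f$ is continuous semialgebraic and $\Pi_C$ is $1$-Lipschitz and semialgebraic (because $C$ is closed convex semialgebraic), the normal map $\mathscr{F}$ is continuous semialgebraic from $\mathbb{R}^n$ to $\mathbb{R}^n$, so in particular has closed graph. Corollary~\ref{Corollary32} applied to $\mathscr{F}$ directly yields the equivalence of (vi)--(ix).

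For the second block, the graph of $\mathscr{S}$ is semialgebraic by the Tarski--Seidenberg theorem (the defining VI is a first-order formula in the data $f$ and $C$) and closed by continuity of $f$ together with closedness of $C$; hence so is the graph of $\mathscr{S}^{-1}$. Applying Theorem~\ref{Theorem31} with $F := \mathscr{S}^{-1}$ yields at once the equivalence of (iv), (v), (ii), (iii). Moreover, for any set-valued map $G$ the elementary equivalence ``$G$ is open iff $G^{-1}$ is lower semicontinuous'' (both are just the statement that $G(U)$ is open for every open $U$) applied to $G = \mathscr{S}^{-1}$ shows that (i) and (iv) coincide. Hence (i)--(v) are all equivalent.

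To link the two chains I would establish (i) $\iff$ (vi) via Lemma~\ref{Lemma41}. That lemma exhibits a continuous bijection $\mathscr{S}(p) \leftrightarrow \mathscr{F}^{-1}(-p)$ via $x \mapsto x - f(x) - p$ with inverse $u \mapsto \Pi_C(u)$; in particular $\mathrm{dom}\mathscr{S} = -\mathrm{range}\mathscr{F}$, so the two local-closedness clauses match. Using again the equivalence ``open $\iff$ inverse lsc'' for $\mathscr{F}$, it remains to show $\mathscr{S}$ is lsc iff $\mathscr{F}^{-1}$ is lsc. Assuming $\mathscr{S}$ is lsc, pick $u^* \in \mathscr{F}^{-1}(y^*)$ and $y^k \to y^*$ in $\mathrm{range}\mathscr{F}$; set $p^k := -y^k$, $x^* := \Pi_C(u^*) \in \mathscr{S}(-y^*)$, choose $x^k \in \mathscr{S}(p^k)$ with $x^k \to x^*$, and observe that $u^k := x^k - f(x^k) - p^k \in \mathscr{F}^{-1}(y^k)$ converges to $u^*$ by continuity of $f$. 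The reverse implication is symmetric, using continuity of $\Pi_C$. This establishes (i) $\iff$ (vi) and therefore the equivalence of all nine conditions.

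The main obstacle is the bridge step: one has to transfer lower semicontinuity across the bijection $\mathscr{S}(p) \leftrightarrow \mathscr{F}^{-1}(-p)$ while respecting the joint dependence of the formula $u = x - f(x) - p$ on both $x$ and the parameter $p$, and to check that the domain/range identities supplied by Lemma~\ref{Lemma41} are strong enough to make the local-closedness clauses in (i) and (vi) genuinely equivalent.
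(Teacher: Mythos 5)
Your proposal is correct, and it shares the paper's skeleton: the two blocks (ii)--(v) and (vi)--(ix) are obtained exactly as in the paper, by applying Theorem~\ref{Theorem31} to $\mathscr{S}^{-1}$ and Corollary~\ref{Corollary32} to the continuous semialgebraic map $\mathscr{F}$. Where you genuinely diverge is in how the blocks are linked. The paper proves three implications: (i)~$\Rightarrow$~(vi) (a qualitative argument essentially identical to your forward bridge, producing $u^k=x^k-f(x^k)-p^k\to u$), then (ix)~$\Rightarrow$~(iii), a \emph{quantitative} transfer of the lower pseudo-H\"older estimate from $\mathscr{F}^{-1}$ to $\mathscr{S}$ that needs the nonexpansiveness of $\Pi_C$ and a localization $\mathscr{S}(p^*)\cap K\subset\Pi_C(\mathscr{F}^{-1}(-p^*)\cap\mathbb{B}_R)$ to handle the compact-set intersection in the definition, and finally (iii)~$\Rightarrow$~(i). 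You instead close the loop purely topologically: the dictionary ``a map is open iff its inverse is lower semicontinuous'' (which the paper recalls after its definition of lower semicontinuity but never invokes in this form) identifies (i) with (iv) and reduces (vi) to lower semicontinuity of $\mathscr{F}^{-1}$, and then Lemma~\ref{Lemma41} transfers lower semicontinuity in both directions. Your route is shorter and avoids the $\mathbb{B}_R$ bookkeeping; the paper's quantitative step has the side benefit of showing that the H\"older data for $\mathscr{S}$ can be taken directly from that of $\mathscr{F}^{-1}$ via nonexpansiveness, though this extra information is not needed for the equivalence. One small point of care in your reverse bridge: rather than relying on the identity $\Pi_C(x^*-f(x^*)-p^*)=x^*$ (true, but it follows from the projection characterization of solutions, not literally from the two displayed relations of Lemma~\ref{Lemma41}), it is cleaner to use the first relation $\mathscr{S}(p^*)=\Pi_C\big(\mathscr{F}^{-1}(-p^*)\big)$ to choose $u^*\in\mathscr{F}^{-1}(-p^*)$ with $\Pi_C(u^*)=x^*$, after which lower semicontinuity of $\mathscr{F}^{-1}$ and continuity of $\Pi_C$ give $x^k:=\Pi_C(u^k)\in\mathscr{S}(p^k)$ with $x^k\to x^*$; the local-closedness clauses match via $\mathrm{dom}\,\mathscr{S}=-\mathrm{range}\,\mathscr{F}$, as you say.
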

\begin{proof}
Since $f$ is continuous, it is easy to check that the graph of $\mathscr{S}$ is closed and the function $\mathscr{F}$ is continuous. Furthermore, in view of Theorem~\ref{TarskiSeidenbergTheorem}, the maps $\mathscr{S}$ and $\mathscr{F}$ are semialgebraic. By Theorem~\ref{Theorem31} and Corollary~\ref{Corollary32}, it suffices to show the implications (i) $\Rightarrow$ (vi), (ix) $\Rightarrow$ (iii) and (iii) $\Rightarrow$ (i).

(i) $\Rightarrow$ (vi).
Let $U$ be an open set in $\mathbb{R}^n$ and we show that $\mathscr{F}(U)$ is open in $\mathrm{range}\mathscr{F}.$
Suppose on the contrary that there exists a point $-p\in \mathscr{F}(U)$ and a sequence $\{-p^k\}$ in $\mathrm{range}\mathscr{F}$ such that $-p^k\to-p$ and $-p^k\notin \mathscr{F}(U)$ for sufficiently large $k.$
Let $u\in U$ with $-p=\mathscr{F}(u).$ By Lemma~\ref{Lemma41}, $x := \Pi_C(u)\in\Pi_C(\mathscr{F}^{-1}(-p))=\mathscr{S}(p).$ Hence
$$-p=\mathscr{F}(u)=f(\Pi_C(u)) + u - \Pi_C(u)=f(x)+u-x,$$
and so, $u=x-p-f(x).$

On the other hand, since $\mathscr{S}$ is lower semicontinuous, there exists a sequence $\{x^k\}\subset \mathscr{S}(p^k)$ such that $x^k\to x.$
It follows from Lemma~\ref{Lemma41} that there exists $u^k\in \mathscr{F}^{-1}(-p^k)$ such that $x^k=\Pi_C(u^k),$ and so,
$$-p^k =\mathscr{F}(u^k)  = f(\Pi_C(u^k)) + u^k - \Pi_C(u^k) =f(x^k)+u^k-x^k.$$
Consequently,
$$\lim_{k \to \infty} u^k = \lim_{k \to \infty} (x^k-p^k-f(x^k)) = x-p-f(x)=u.$$
Since $U$ is open containing $u,$ $u^k\in U$ for all large $k.$
For all such $k,$ $-p^k = \mathscr{F}(u^k) \in \mathscr{F}(U),$ which is a contradiction; thus $\mathscr{F}$ is an open map from $\mathbb{R}^n$ into $\mathrm{range}\mathscr{F}.$

Observe that $\mathrm{dom}\mathscr{S} = -\mathrm{range}\mathscr{F}$ because of Lemma~\ref{Lemma41}. This, together with the assumption, implies that the set $\mathrm{range}\mathscr{F}$ is locally closed.

(ix) $\Rightarrow$ (iii).
Let $p^*\in \mathrm{dom}\mathscr{S}$ and $K$ be a compact set in $\mathbb{R}^n$ such that $\mathscr{S}(p^*)\cap K\neq\emptyset.$ Since $f$ is continuous and $K$ is compact, we can find a real number $R > 0$ satisfying
$$\{x -  f(x) - p^* \ | \ x \in \mathscr{S}(p^*) \cap K\} \subset \mathbb{B}_R,$$
which, together with Lemma~\ref{Lemma41}, yields
$$\mathscr{S}(p^*)\cap K\subset\Pi_C(\mathscr{F}^{-1}(-p^*) \cap \mathbb{B}_R).$$
Since $\mathscr{F}^{-1}$ is lower pseudo-H\"older continuous, there exist constants $\epsilon>0,$ $c>0$ and $\alpha>0$ such that
$$\mathscr{F}^{-1}(-p^*)\cap \mathbb{B}_R\subset \mathscr{F}^{-1}(-p)+c\|p-p^*\|^{\alpha}\mathbb{B}$$
for all $p\in\mathbb{B}_\epsilon(p^*)\cap(-\mathrm{range}\mathscr{F})=\mathbb{B}_\epsilon(p^*)\cap\mathrm{dom}\mathscr{S}.$
It suffices to show that
$$\mathscr{S}(p^*)\cap K\subset \mathscr{S}(p)+c\|p-p^*\|^\alpha\mathbb{B}$$
for all $p\in \mathbb{B}_\epsilon(p^*)\cap \mathrm{dom}\mathscr{S}.$
To see this, take any $x \in \mathscr{S}(p^*)\cap K$ and $p\in \mathbb{B}_\epsilon(p^*)\cap\mathrm{dom}\mathscr{S}.$
Then there exists $u \in \mathscr{F}^{-1}(-p^*)\cap\mathbb{B}_R$ such that $x = \Pi_C(u).$
Note that $\mathscr{F}$ is continuous on $\mathbb{R}^n$.
So $\mathscr{F}^{-1}(-p)$ is a closed set in $\mathbb{R}^n,$ and hence, there exists $v \in \mathscr{F}^{-1}(-p)$ such that
$$\|u - v\| = \mathrm{dist}(u, \mathscr{F}^{-1}(-p)).$$
Letting $y := \Pi_C(v) \in \Pi_C(\mathscr{F}^{-1}(-p)) = \mathscr{S}(p),$ we get
\begin{equation*}
\mathrm{dist}(x, \mathscr{S}(p)) \leq \|x - y\| = \|\Pi_C(u) - \Pi_C(v)\|.
\end{equation*}
Note that the projection $\Pi_C\colon \mathbb{R}^n\to C$ is nonexpansive. Consequently,
$$\mathrm{dist}(x, \mathscr{S}(p)) \leq \|u - v\| = \mathrm{dist}(u, \mathscr{F}^{-1}(-p))\leq c\|p - p^*\|^\alpha.$$
Therefore $\mathscr{S}$ is lower pseudo-H\"older continuous.

(iii) $\Rightarrow$ (i).
Assume that $\mathscr{S}$ is lower pseudo-H\"older continuous. Then, analysis similar to that in the proof of Theorem~\ref{Theorem31} shows that $\mathrm{dom}\mathscr{S}$ is locally closed. So it remains to prove that $\mathscr{S}$ is lower semicontinuous. To this end, take any $p^*\in\mathrm{dom}\mathscr{S}$ and $x^*\in\mathscr{S}(p^*).$
Let $\{p^k\}$ be a sequence in $\mathrm{dom}\mathscr{S}$ such that $p^k\to p^*$ as $k\to \infty.$
There exist constants $\epsilon>0,$ $c>0$ and $\alpha>0$ such that
$$\mathrm{dist}(x^*, \mathscr{S}(p))\leq c\|p-p^*\|^\alpha$$
for all $p\in\mathbb{B}_\epsilon(p^*)\cap\mathrm{dom}\mathscr{S}.$
So, for sufficiently large $k,$
$$\mathrm{dist}(x^*,\mathscr{S}(p^k))\leq c\|p^k-p^*\|^\alpha.$$
Since the set $\mathscr{S}(p^k)$ is closed, there exists $x^k\in\mathscr{S}(p^k)$ such that
$$\|x^* - x^k\|\leq c\|p^k-p^*\|^\alpha.$$
Letting $k\to \infty,$ we get $x^k \to x^*.$ Therefore, $\mathscr{S}$ is lower semicontinuous.
\end{proof}

The next result, together with Theorem~\ref{Theorem41}, shows that the lower semicontinuity of the map $\mathscr{S}$ ensures the finiteness and the pseudo-H\"older continuity of the map $\mathscr{S}.$ It should be mentioned at this point that we make no assumption about how the set $C$ is presented; compare \cite[Theorem~4.3]{Lee2018}.

\begin{theorem}
If the map $\mathscr{S}$ is lower semicontinuous then there exists a natural number $N$ such that for all $p \in \mathrm{cl}(\mathrm{int}(\mathrm{dom}\mathscr{S}))$ the set $\mathscr{S}(p)$ has at most $N$ points.
\end{theorem}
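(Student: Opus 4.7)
The plan is to bound $|\mathscr{S}(p)|$ uniformly on $\mathrm{int}(\mathrm{dom}\mathscr{S})$ by combining the implication (i)$\Rightarrow$(vi) of Theorem~\ref{Theorem41} with Lemma~\ref{Lemma23} and Lemma~\ref{Uniformbounds}, and then to propagate this bound to the whole closure $\mathrm{cl}(\mathrm{int}(\mathrm{dom}\mathscr{S}))$ via a pigeonhole argument exploiting the lower semicontinuity of $\mathscr{S}$.

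For the interior step, I would first observe that the argument establishing the implication (i)$\Rightarrow$(vi) of Theorem~\ref{Theorem41} uses the lower semicontinuity of $\mathscr{S}$ alone to show that $\mathscr{F}$ is an open map from $\mathbb{R}^n$ into $\mathrm{range}\mathscr{F}$; the local closedness hypothesis is invoked there solely to deduce the local closedness of $\mathrm{range}\mathscr{F}$, which we do not need. Since Lemma~\ref{Lemma41} gives $\mathrm{dom}\mathscr{S}=-\mathrm{range}\mathscr{F}$, hence $\mathrm{int}(\mathrm{range}\mathscr{F})=-\mathrm{int}(\mathrm{dom}\mathscr{S})$, I would restrict $\mathscr{F}$ to the open semialgebraic set $U:=\mathscr{F}^{-1}(\mathrm{int}(\mathrm{range}\mathscr{F}))$. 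A short topology check shows that $\mathscr{F}|_U:U\to\mathrm{int}(\mathrm{range}\mathscr{F})$ is still open (images of sets open in $U$ are open in $\mathrm{range}\mathscr{F}$ and contained in $\mathrm{int}(\mathrm{range}\mathscr{F})$, hence open in $\mathbb{R}^n$). Lemma~\ref{Lemma23} then yields that $\mathscr{F}^{-1}(-p)$ is finite for every $p\in\mathrm{int}(\mathrm{dom}\mathscr{S})$, and the first identity of Lemma~\ref{Lemma41} transfers this finiteness to $\mathscr{S}(p)=\Pi_C(\mathscr{F}^{-1}(-p))$.

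To turn the fiberwise finiteness into a uniform bound, I would apply Lemma~\ref{Uniformbounds} to the semialgebraic set $\mathrm{graph}\mathscr{S}\subset\mathbb{R}^n\times\mathbb{R}^n$, obtaining an integer $N$ such that for every $p$ the set $\mathscr{S}(p)$ has at most $N$ connected components. For $p\in\mathrm{int}(\mathrm{dom}\mathscr{S})$ the set $\mathscr{S}(p)$ is finite by the previous step, so each of its connected components is a singleton and $|\mathscr{S}(p)|\le N$.

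It remains to extend the bound to $\mathrm{cl}(\mathrm{int}(\mathrm{dom}\mathscr{S}))$. For such a $p$, I would argue by contradiction: assuming $\mathscr{S}(p)$ contains $N+1$ distinct points $y_1,\ldots,y_{N+1}$, pick $p^k\to p$ with $p^k\in\mathrm{int}(\mathrm{dom}\mathscr{S})$, and use the lower semicontinuity of $\mathscr{S}$ to produce sequences $x_i^k\in\mathscr{S}(p^k)$ with $x_i^k\to y_i$ for each $i$; since the distances between distinct $y_i$ are strictly positive, the $x_i^k$ are pairwise distinct for large $k$, forcing $|\mathscr{S}(p^k)|\ge N+1$ and contradicting the uniform bound of the previous step. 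The main obstacle is the bookkeeping of the interior step: $\mathscr{F}$ is not a priori open into an open target, so one must take care to restrict to the preimage of the interior of $\mathrm{range}\mathscr{F}$ in order to apply Lemma~\ref{Lemma23}; the final pigeonhole extension is routine once the uniform bound on the interior is in hand.
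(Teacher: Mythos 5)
Your proposal is correct, and its first half coincides with the paper's own argument: like the paper, you note that the openness of $\mathscr{F}$ in the implication (i)\,$\Rightarrow$\,(vi) of Theorem~\ref{Theorem41} needs only the lower semicontinuity of $\mathscr{S}$, restrict $\mathscr{F}$ to $U=\mathscr{F}^{-1}(\mathrm{int}(\mathrm{range}\mathscr{F}))$, invoke Lemma~\ref{Lemma23} to get finiteness of the fibers, transfer it to $\mathscr{S}(p)=\Pi_C(\mathscr{F}^{-1}(-p))$ via Lemma~\ref{Lemma41}, and obtain the uniform bound $N$ from Lemma~\ref{Uniformbounds} applied to $\mathrm{graph}\,\mathscr{S}$. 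Where you genuinely diverge is the passage from $\mathrm{int}(\mathrm{dom}\mathscr{S})$ to its closure: the paper first proves \emph{finiteness} of $\mathscr{S}(p^*)$ at boundary points $p^*\in\mathrm{dom}\mathscr{S}$ by semialgebraic machinery --- the curve selection lemma (Lemma~\ref{LocalCurveSelectionLemma}), Hardt's triviality (Theorem~\ref{HardtTheorem}) applied to the set $A=\{(t,x)\,:\,x\in\mathscr{S}(p(t))\}$ over a curve entering the interior, and the dimension inequality of Lemma~\ref{DimensionLemma}, together with lower semicontinuity to trap $\mathscr{S}(p^*)$ inside the finite set $\mathrm{cl}A\setminus A$ --- and only then applies Lemma~\ref{Uniformbounds}. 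You instead exploit the quantitative bound $|\mathscr{S}(p)|\le N$ already available on the interior and run an elementary pigeonhole contradiction: $N+1$ distinct points of $\mathscr{S}(p^*)$ would, by lower semicontinuity along a sequence $p^k\to p^*$ taken in the interior, produce $N+1$ eventually pairwise distinct points in $\mathscr{S}(p^k)$. This is sound (if $\mathscr{S}(p^*)$ has $N+1$ points then $p^*\in\mathrm{dom}\mathscr{S}$, so lower semicontinuity applies there), and it buys a noticeably lighter closure step, avoiding Hardt's theorem, the curve selection lemma and the dimension lemma entirely; the price is that your boundary bound is parasitic on the uniform interior bound, whereas the paper's route yields finiteness at boundary points as an independent semialgebraic fact before any uniform bound is invoked. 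Both arguments use the semialgebraic structure only where it is truly needed (Lemma~\ref{Lemma23} and Lemma~\ref{Uniformbounds}), so your version is a legitimate, somewhat more elementary alternative.
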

\begin{proof}
Assume that the map $\mathscr{S}$ is lower semicontinuous. By Lemma~\ref{Uniformbounds}, it suffices to show that for all $p \in \mathrm{cl}(\mathrm{int}(\mathrm{dom}\mathscr{S}))$ the set $\mathscr{S}(p)$ is finite.

Tracing the argument in the proof of Theorem~\ref{Theorem41}, we can see that $\mathscr{F}$ is an open map from $\mathbb{R}^n$ into $\mathrm{range} \mathscr{F}.$ The set $V := \mathrm{int}(\mathrm{range}\mathscr{F})$ is an open set in $\mathbb{R}^n$ so is $U := \mathscr{F}^{-1}(V)$ (because the map $\mathscr{F}$ is continuous). Hence, the restriction of $\mathscr{F}$ on $U$ is an open map from $U$ into $V.$ This, combined with Lemma~\ref{Lemma23}, implies that for all $p \in V,$ the inverse image $\mathscr{F}^{-1}(p)$ is finite. Hence, by Lemma~\ref{Lemma41}, for all $p \in W := \mathrm{int}({\mathrm{dom}} \mathscr{S})$ the set $\mathscr{S}(p)$ is finite.

Take any $p^* \in \mathrm{cl}{W} \setminus W.$ If $p^* \not \in {\mathrm{dom}} \mathscr{S},$ then $\mathscr{S}(p^*) = \emptyset$ and there is nothing to prove. So assume that $p^* \in {\mathrm{dom}} \mathscr{S}.$ The remaining part of the proof is similar to that of \cite[Theorem~4.3]{Lee2018}, and we give it here 
for the convenience of the reader. By Lemma~\ref{LocalCurveSelectionLemma}, there exists a continuous semialgebraic curve $p \colon [0, \epsilon) \rightarrow \mathbb{R}^n$ such that $p(0) = p^*$ and $p(t) \in W$ for all $t \in (0,\epsilon).$
Let $A := \{(t, x) \in (0, \epsilon) \times C \ | \  x \in \mathscr{S}(p(t)) \}$ and consider the continuous  map  $\pi \colon A \rightarrow (0,\epsilon),  (t, x) \mapsto t.$
It is easy to check that $A$ is a semialgebraic set, and so, $\pi$ is a semialgebraic map. Applying Theorem~\ref{HardtTheorem} to $\pi$ (and shrinking $\epsilon > 0$ if necessary), we can see that the set $A$ is homeomorphic to $(0, \epsilon) \times \pi^{-1}(t_0)$ for some $t_0 \in (0, \epsilon).$  Note that $\dim \pi^{-1}(t_0) = 0$ since $\pi^{-1}(t_0) = \mathscr{S}(p(t_0))$ is a finite set.
Therefore,
\begin{eqnarray*}
  \dim A
&=& \dim ((0, \epsilon) \times\pi^{-1}(t_0)) \\
&=& \dim (0, \epsilon) + \dim\pi^{-1}(t_0) \\
&=& \dim (0, \epsilon)\\
&=& 1.
\end{eqnarray*}
Thanks to Lemma~\ref{DimensionLemma},
$\dim(\mathrm{cl} A \setminus A) < \dim A = 1,$ and so, $\mathrm{cl} A \setminus A$ is a finite set. On the other hand, since the map $\mathscr{S}$ is lower semicontinuous at $p^*,$ we have that
$$\mathscr{S}(p^*) \subset \{x : (0, x) \in \mathrm{cl} A \setminus A\}.$$
Consequently, $\mathscr{S}(p^*) $ is a finite set. The theorem follows.
\end{proof}

\section{Final remarks} \label{Section5}

The following example shows that, in general, we can not replace the word {\em H\"older} by {\em Lipschitz.}
\begin{example}{\rm
Let $f \colon \mathbb{R} \to \mathbb{R}$ be the polynomial function defined by $f(x) := x^d$ for some odd integer $d > 1.$ Then $f$ is strictly increasing and so it is an open map. Clearly, the inverse function $f^{-1} \colon \mathbb{R} \to \mathbb{R}, y \mapsto y^{\frac{1}{d}},$ is pseudo-H\"older continuous at $0$ with the exponent $\alpha = \frac{1}{d} < 1,$ but $f^{-1}$ is not Lipschitz continuous at $0.$
}\end{example}

The following example shows that the H\"older continuity properties may not hold on unbounded sets.

\begin{example}
Consider the semialgebraic function $f \colon \mathbb{R} \to \mathbb{R}$ defined by
$$f(x) :=
\begin{cases}
\frac{x^2}{x^2 + 1} & \textrm{ if } x \ge 0, \\
-\frac{x^2}{x^2 + 1} & \textrm{ otherwise.}
\end{cases}$$
A direct computation shows that $f$ is continuously differentiable and strictly increasing. So $f$ is an open map. By Corollary~\ref{Corollary32}, $f^{-1}$ is lower pseudo-H\"older continuous on $\mathrm{dom}f  = (-1, 1),$ i.e.,
for each compact set $K \subset \mathbb{R}$ and each $y^* \in (-1, 1)$ there exist constants $\epsilon > 0,$ $c > 0$ and $\alpha > 0$ such that
\begin{eqnarray*}
f^{-1}(y^*)  \cap K &\subset& f^{-1}(y) + c |y - y^*|^\alpha \mathbb{B} \quad \textrm{ for all } \quad y \in \mathbb{B}_\epsilon(y^*).
\end{eqnarray*}
In general the inclusion is no longer true if $K$ is a unbounded set. Indeed, let $K = \mathbb{R},$ $y^* = 0$ and $y^k := 1 - \frac{1}{k}$ for $k \ge 1.$ A simple calculation shows that $f^{-1}(y^*) = \{0\}$ and 
$f^{-1}(y^k) = \{\sqrt{k - 1}\}.$ Hence
$$\lim_{k \to \infty} \mathrm{dist}(0, f^{-1}(y^k)) = 
\lim_{k \to \infty} \sqrt{k - 1} = +\infty.$$
Therefore, there do not exist constants $\epsilon > 0,$ $c > 0$ and $\alpha > 0$ such that
\begin{eqnarray*}
f^{-1}(y^*) &\subset& f^{-1}(y) + c |y - y^*|^\alpha \mathbb{B}  \quad \textrm{ for all } \quad y \in \mathbb{B}_\epsilon(y^*).
\end{eqnarray*}
\end{example}

Next, we give a negative answer to the question of Dontchev--Rockafellar mentioned in the introduction.
\begin{example} \label{Example53}
We identify $\mathbb{C}$ with $\mathbb{R}^2$ and consider the polynomial map $f \colon \mathbb{C} \to \mathbb{C}, x \mapsto x^2$ (that is $f(x_1, x_2) :=
(x_1^2 - x_2^2, 2 x_1 x_2)$ for $(x_1, x_2) \in \mathbb{R}^2).$ Clearly,
$$\#f^{-1}(y) =
\begin{cases}
1 & \textrm{ if } y = 0, \\
2 &\textrm{ otherwise.}
\end{cases}$$
Moreover, the Jacobian of $f$ is nonnegative. In light of \cite[Theorem]{Gamboa1996} (see also \cite[Theorem~3.14]{Denkowski2017}), $f$ is an open map. Note that $f$ is surjective but not injective (compare \cite[Theorem~5]{Gowda1996}). A simple calculation shows that the inverse map $f^{-1} \colon \mathbb{C} \rightrightarrows \mathbb{C}$ is not Lipschitz continuous around $y = 0.$ However, for all $y$ and $y'$ near $0 \in \mathbb{C},$ we have
$$f^{-1}(y) \subset f^{-1}(y') + c|y - y'|^{\frac{1}{2}}\mathbb{B}$$
for some $c > 0.$ We now let $C = \mathbb{C}.$ Then the solution map $\mathscr{S}$ associated to the problem~\eqref{VI} is given by
$\mathscr{S}(p) = f^{-1}(-p)$ for $p \in \mathbb{C}.$ Therefore, the map $\mathscr{S}$ is lower semicontinuous on $\mathbb{C}$ but it
is not single-valued and also not Lipschitz continuous around $0 \in \mathbb{C}.$
\end{example}

Finally, we note that an assumption like semialgebraicity (or, more generally, tameness) is crucial for results presented in this paper. To see this, consider the following example; note that there is an inaccuracy in \cite[Example~4.5]{Borwein1988}.

\begin{example} \label{Example5}
Let $f \colon \mathbb{R} \to \mathbb{R}$ be the function defined by
$$f(x) :=
\begin{cases}
e^{-1/x} & \textrm{ if } x > 0, \\
0 & \textrm{ if } x=0, \\
-e^{1/x} &\textrm{ if } x<0.
\end{cases}$$
A direct computation shows that $f$ is a homeomorphism from $\mathbb{R}$ into $\mathrm{range} f = (-1, 1).$ So, $f$ is an open map. On the other hand, $f$ is not lower pseudo-H\"older continuous on $\mathrm{range} f.$
Indeed, by contradiction,  for $y^*=0$ and $K \subset \mathbb{R}$ with $f^{-1}(y^*)\cap K \ne \emptyset,$ assume that there exist constants $\epsilon>0,$ $c>0$ and $\alpha>0$ such that
$$f^{-1}(y^*)\cap K\subset f^{-1}(y)+c|y-y^*|^{\alpha}$$
for all $y \in \mathbb{B}_\epsilon(y^*).$
Then, this inclusion can be rewritten as
$$\mathrm{dist}(0, f^{-1}(y))\leq c|y|^{\alpha}.$$
Since the function $f$ is strictly increasing, it holds that
$$|x|\leq c|f(x)|^{\alpha}$$
for all $x$ near $0.$
This implies that
$$0<\frac{1}{c}\leq\lim_{x\to0}\frac{|f(x)|^{\alpha}}{|x|}=0,$$
which is impossible.
\end{example}


\end{document}